\newtheorem{proposition}{Proposition}
\newtheorem{theorem}[proposition]{Theorem}
\newtheorem{corollary}[proposition]{Corollary}
\newtheorem{lemma}[proposition]{Lemma}
\newtheorem{remark}[proposition]{Remark}
\def\ifm#1#2{\relax \ifmmode#1\else#2\fi}
\newcommand{\N}{\mathbb N}
\newcommand{\R}{\mathbb R}
\newcommand{\Rpos}{\mathbb{R}_{\!>0}}
\newcommand{\klk}    {\ifm {,\ldots,} {$,\ldots,$}}
\newcommand{\spar} {\vskip 0.25cm}
\begin{document}

\begin{frontmatter}



\title{Efficient approximation of the solution of certain nonlinear reaction--diffusion equation I:\\ the case of small absorption\tnoteref{label1}}

\tnotetext[label1]{The research was partially supported by the following grants: UNGS 30/3084, UNGS 30/1066, CIC (2007--2009) and PIP 11220090100421.}


\author{Ezequiel Dratman}
\ead{edratman@ungs.edu.ar}
\ead[url]{http://sites.google.com/site/ezequieldratman}

\address{Instituto de Ciencias,
Universidad Nacional de Gene\-ral Sarmiento, Juan M. Gu\-ti\'e\-rrez
1150 (B1613GSX) Los Polvorines, Buenos Aires, Argentina.}

\begin{abstract}
We study the positive stationary solutions of a standard finite-difference discretization of the semilinear heat equation with nonlinear Neumann boundary conditions. We prove that, if \emph{the absorption is small enough}, compared with the flux in the boundary, there exists a unique solution of such a discretization, which approximates the unique positive stationary solution of the ``continuous'' equation. Furthermore, we exhibit an algorithm computing an $\varepsilon$-approximation of such a solution by means of a homotopy continuation method. The cost of our
algorithm is {\em linear} in the number of nodes involved in the discretization and the logarithm of the number of digits of approximation required.
\end{abstract}

\begin{keyword}
Two-point boundary-value problem \sep finite differences \sep
Neumann boundary condition \sep stationary solution \sep homotopy
continuation \sep polynomial system solving \sep condition number \sep complexity


\MSC 65H10 \sep 65L10 \sep 65L12 \sep 65H20 \sep 65Y20

\end{keyword}

\end{frontmatter}
%
%
\section{Introduction}\label{seccion: intro}
This article deals with the following semilinear heat equation with
Neumann boundary conditions:
\begin{equation}\label{ec: heat equation}
\left\{\begin{array}{rclcl}
  u_t & = & u_{xx}- g_1(u) & \quad & \mbox{in }(0,1)\times[0,T), \\
  u_x(1,t) & = & \alpha g_2\big(u(1,t)\big) & \quad & \mbox{in }[0,T), \\
  u_x(0,t) & = & 0 & \quad & \mbox{in }[0,T), \\
  u(x,0) & = & u_0(x)\ge 0 & \quad & \mbox{in }[0,1],
\end{array}\right.
\end{equation}
where $g_1,g_2 \in \mathcal{C}^3(\R)$ are analytic functions in $x=0$ and $\alpha$ is a positive constant. The nonlinear heat equation models many physical, biological and engineering phenomena, such as heat conduction (see, e.g., \cite[\S 20.3]{Cannon84}, \cite[\S 1.1]{Pao92}), chemical reactions and combustion (see, e.g., \cite[\S 5.5]{BeEb89}, \cite[\S 1.7]{Grindrod96}), growth and migration of populations (see, e.g., \cite[Chapter 13]{Murray02}, \cite[\S
1.1]{Pao92}), etc. In particular, ``power-law'' nonlinearities have long been of interest as a tractable prototype of general polynomial nonlinearities (see, e.g., \cite[\S 5.5]{BeEb89}, \cite[Chapter 7]{GiKe04}, \cite{Levine90}, \cite{SaGaKuMi95}, \cite[\S 1.1]{Pao92}).

The long-time behavior of the solutions of (\ref{ec: heat equation}) has been intensively studied (see, e.g., \cite{ChFiQu91}, \cite{LoMaWo93}, \cite{Quittner93}, \cite{Rossi98}, \cite{FeRo01}, \cite{RoTa01}, \cite{AnMaToRo02}, \cite{ChQu04} and the references therein). In order to describe the dynamic behavior of the solutions of (\ref{ec: heat equation}) it is usually necessary to analyze the behavior of the corresponding {\em stationary solutions} (see, e.g.,
\cite{FeRo01}, \cite{ChFiQu91}), i.e., the positive solutions of the following two-point boundary-value problem:
\begin{equation}\label{ec: heat equation - stationary}
\left\{\begin{array}{rclcl}
  u_{xx} & = & g_1(u) & \quad & \mbox{in }(0,1), \\
  u_x(1) & = & \alpha g_2\big(u(1)\big), & \quad & \\
  u_x(0) & = & 0. & \\
\end{array}\right.
\end{equation}
%
%
%
%

The usual numerical approach to the solution of (\ref{ec: heat equation}) consists of considering a second-order finite-difference discretization in the variable $x$, with a uniform mesh, keeping the variable $t$ continuous (see, e.g., \cite{BaBr98}). This semi-discretization in space leads to the following initial-value problem:
\begin{equation}\label{ec: heat equation - space discretiz}
\!\!\! \left
 \{\begin{array}{rcll}
 u_1'& = & \frac{2}{h^{2}}(u_2-u_1) - g_1(u_1), &  \\[1ex]
 u_k'& = & \frac{1}{h^2}(u_{k+1}-2u_k+u_{k-1})-g_1(u_k), \quad
 (2\le k\le n-1) \\[1ex]
 u_n' &\! = &\! \frac{2}{h^2}(u_{n-1}-u_n) - g_1(u_n) + \frac{2 \alpha}{h}g_2(u_n),
 \\[1ex]
 u_k(0) &\! = &\! u_0(x_k), \qquad  (1\le k\le n)
\end{array}\right.
\end{equation}
where $h:=1/(n-1)$ and $x_1,\dots,x_n$ define a uniform partition of the interval $[0, 1]$.
A similar analysis to that in \cite{DrMa09} shows the convergence of the positive solutions of (\ref{ec: heat equation
- space discretiz}) to those of (\ref{ec: heat equation}) and proves that every bounded solution of (\ref{ec: heat equation - space discretiz}) tends to a stationary solution of (\ref{ec: heat equation - space discretiz}), namely to a solution of
\begin{equation}\label{ec: heat equation - stationary space discretiz}
\left\{\begin{array}{rclcl}
  0& = & \frac{2}{h^2}(u_2-u_1) - g_1(u_1), & \quad & \\[1ex]
  0& = & \frac{1}{h^2}(u_{k+1}-2u_k+u_{k-1}) - g_1(u_k),
  \quad (2\le k\le n-1)\ \ \\[1ex]
  0 & = & \frac{2}{h^2}(u_{n-1}-u_n) - g_1(u_n) + \frac{2\alpha}{h} g_2(u_n).
 \\
\end{array}\right.
\end{equation}
Hence, the dynamic behavior of the positive solutions of (\ref{ec: heat equation - space discretiz}) is rather determined by the set of solutions $(u_1\klk u_n)\in(\Rpos)^n$ of (\ref{ec: heat equation - stationary space discretiz}).

Very little is known concerning the study of the stationary solutions of (\ref{ec: heat equation - space discretiz}) and the comparison between the stationary solutions of (\ref{ec: heat equation - space discretiz}) and (\ref{ec: heat equation}). In \cite{FeRo01}, \cite{DrMa09} and \cite{Dratman10} there is a complete study of the positive solutions of (\ref{ec: heat equation - stationary space discretiz}) for the particular case $g_1(x):=x^p$ and $g_2(x):=x^q$, i.e., a complete study of the positive solutions of
\begin{equation}\label{ec: heat equation - stationary space discretiz - monomial}
\left\{\begin{array}{rclcl}
  0& = & \frac{2}{h^2}(u_2-u_1) - u_1^p, & \quad & \\[1ex]
  0& = & \frac{1}{h^2}(u_{k+1}-2u_k+u_{k-1}) - u_k^p,
  \quad (2\le k\le n-1)\ \ \\[1ex]
  0 & = & \frac{2}{h^2}(u_{n-1}-u_n) - u_n^p + \frac{2\alpha}{h} u_n^q.
 \\
\end{array}\right.
\end{equation}
In \cite{FeRo01} it is shown that there are spurious solutions of (\ref{ec: heat equation - stationary space discretiz}) for $q < p <2q-1$, that is, positive solutions of (\ref{ec: heat equation - stationary space discretiz}) not converging to any solution of (\ref{ec: heat equation - stationary}) as the mesh size $h$ tends to zero.

In \cite{DrMa09} and \cite{Dratman10} there is a complete study of (\ref{ec: heat equation - stationary space discretiz}) for $p > 2q-1$ and $p<q$. In these articles it is shown that in such cases there exists exactly one positive real solution. Furthermore, a numeric algorithm solving a given instance of the problem under consideration with $n^{O(1)}$ operations is proposed. In particular, the algorithm of \cite{Dratman10} has linear cost in $n$, that is, this algorithm gives a numerical approximation of the desired solution with $O(n)$ operations.

We observe that the family of systems (\ref{ec: heat equation - stationary space discretiz - monomial}) has typically an exponential number $O(p^n)$ of {\em complex} solutions (\cite{DeDrMa05}), and hence it is ill conditioned from the point of view of its solution by the so-called robust universal algorithms (cf. \cite{Pardo00}, \cite{CaGiHeMaPa03}, \cite{DrMaWa09}). An example of such algorithms is that of general continuation methods (see, e.g., \cite{AlGe90}). This shows the need of algorithms specifically designed to compute positive solutions of ``structured'' systems like (\ref{ec: heat equation - stationary space discretiz}).

Continuation methods aimed at approximating the real solutions of nonlinear systems arising from a discretization of two-point boundary-value problems for second-order ordinary differential equations have been considered in the literature (see, e.g., \cite{AlBaSoWa06}, \cite{Duvallet90}, \cite{Watson80}). These works are usually concerned with Dirichlet problems involving an equation of the form $u_{xx}=f(x,u,u_x)$ for which the existence and
uniqueness of solutions is known. Further, they focus on the existence of a suitable homotopy path rather on the cost of the underlying continuation algorithm. As a consequence, they do not seem to be suitable for the solution of (\ref{ec: heat equation - stationary space discretiz}). On the other hand, it is worth mentioning the analysis of \cite{Kacewicz02} on the complexity of shooting methods for two-point boundary value problems.

Let $g_1,g_2 \in \mathcal{C}^3(\R)$ be analytic functions in $x=0$ such that $g_i(0)=0$, $g_i'(x) > 0$, $g_i''(x) > 0$ and $g_i'''(x) \ge 0$ for all $x>0$ with $i=1,2$. We observe that $g_1$ and $g_2$ are a wide generalization of the monomial functions of system (\ref{ec: heat equation - stationary space discretiz - monomial}). Moreover, we shall assume throughout the paper that the function $g:=g_1/g_2$ is strictly decreasing, generalizing thus the relation $p < q$ in (\ref{ec: heat equation - stationary space discretiz - monomial}). In this article we study the existence and uniqueness of the positive solutions of (\ref{ec: heat equation - stationary space discretiz}), and we obtain numerical approximations of these solutions using homotopy methods. In a forthcoming paper, we shall consider the generalization of the relations $q < p < 2q-1$ and $2q-1 < p$ in (\ref{ec: heat equation - stationary space discretiz - monomial}).

%
%
\subsection{Our contributions}
In the first part of the article we prove that (\ref{ec: heat equation - stationary space discretiz}) has a unique positive solution, and we obtain upper and lower bounds for this solution independents of $h$, generalizing the results of \cite{Dratman10}.

In the second part of the article we exhibit an algorithm which computes an $\varepsilon$-approximation of the positive solution of (\ref{ec: heat equation - stationary space discretiz}). Such an algorithm is a continuation method that tracks the positive real path determined by the smooth homotopy obtained by considering (\ref{ec: heat equation - stationary space discretiz}) as a family of systems parametrized by $\alpha$. Its cost is roughly of $n\log\log\varepsilon$ arithmetic operations, improving thus the exponential cost of general continuation methods.

The cost estimate of our algorithm is based on an analysis of the condition number of the corresponding homotopy path, which might be of independent interest. We prove that such a condition number can be bounded by a quantity independent of $h:=1/n$. This in particular implies that each member of the family of systems under consideration is significantly better conditioned than both an ``average'' dense system (see, e.g., \cite[Chapter 13, Theorem 1]{BlCuShSm98}) and an ``average'' sparse system (\cite[Theorem 1]{MaRo04}).

%
%
\subsection{Outline of the paper}

Section \ref{seccion: existencia y unicidad} is devoted to determine the number of positive solutions of (\ref{ec: heat equation - stationary space discretiz}). For this purpose, we prove that the homotopy of systems mentioned above is smooth (Theorem \ref{Teo: smoothness homotopy path}). From this result we deduce the existence and uniqueness of the positive solutions of (\ref{ec: heat equation - stationary space discretiz}).

In Section \ref{seccion: cotas} we obtain upper and lower bounds for the coordinates of the positive solution of (\ref{ec: heat equation - stationary space discretiz}).

In Section \ref{seccion: condicionamiento} we obtain estimates on the condition number of the homotopy path considered in the previous section (Theorem \ref{Teo: Cota Nro Condicion}). Such estimates are applied in Section \ref{seccion: algoritmo} in order to estimate the cost of the homotopy continuation method for computing the positive solution of
(\ref{ec: heat equation - stationary space discretiz}).

\spar
\spar

%

\section{Existence and uniqueness of stationary solutions}
\label{seccion: existencia y unicidad}
Let $U_1 \klk U_n$ be indeterminates over $\R$. Let $g_1$ and $g_2$ be two functions of class $\mathcal{C}^3(\R)$ such that $g_i(0) = 0$, $g_i'(x) > 0$, $g_i''(x) > 0$ and $g_i'''(x) \ge 0$ for all $x > 0$ with $i = 1,2$. As stated in the
introduction, we are interested in the positive solutions of (\ref{ec: heat equation - stationary space discretiz}) for a given positive value of $\alpha$, that is, in the positive solutions of the nonlinear system
\begin{equation}\label{ec: sistema_alpha}
\left\{\begin{array}{rclcl}
  0& = & -(U_2-U_1) + \frac{h^2}{2} g_1(U_1), \\[1ex]
  0& = & -(U_{k+1} - 2U_k + U_{k-1}) +  h^2 g_1(U_k),
  \quad (2\le k\le n-1)
  \\[1ex]
  0 & = & -(U_{n-1}-U_n)  \frac{h^2}{2} g_1(U_n) - h \alpha g_2(U_n),
  & \quad &  \\
\end{array}\right.
\end{equation}
for a given value $\alpha=\alpha^*>0$, where $h:={1}/(n-1)$. Observe that, as $\alpha$ runs through all
possible values in $\Rpos$, one may consider (\ref{ec: sistema_alpha})
as a family of nonlinear systems parametrized by $\alpha$, namely,
\begin{equation}\label{ec: sistema_A}
\left\{\begin{array}{rclcl}
  0& = & -(U_2-U_1) + \frac{h^2}{2} g_1(U_1), \\[1ex]
  0& = & -(U_{k+1} - 2U_k + U_{k-1}) +  h^2 g_1(U_k),
  \quad (2\le k\le n-1)
  \\[1ex]
  0 & = & -(U_{n-1}-U_n)  \frac{h^2}{2} g_1(U_n) - h A g_2(U_n),
  & \quad &  \\
\end{array}\right.
\end{equation}
where $A$ is a new indeterminate.

%
\subsection{Preliminary analysis}
Let $A,U_1\klk U_n$ be indeterminates over $\R$, set $U:=(U_1\klk
U_n)$ and denote by $F:\R^{n+1} \to \R^n$ the nonlinear map defined
by the right-hand side of (\ref{ec: sistema_A}). From the first
$n-1$ equations of (\ref{ec: sistema_A}) we easily see that, for a
given positive value $U_1=u_1$, the (positive) values of $U_2\klk
U_n,A$ are uniquely determined. Therefore, letting $U_1$ vary, we
may consider $U_2\klk U_n$, $A$ as functions of $U_1$, which are
indeed recursively defined as follows:
\begin{equation}\label{ec: relaciones recursivas}
\begin{array}{rcl}
 U_1(u_1)&:=& u_1,\\[1ex]
 U_2(u_1)&:=& u_1+\frac{h^2}{2}g_1(u_1),\\[1ex]
 U_{k+1}(u_1)&:=& 2U_k(u_1)-U_{k-1}(u_1)+h^2g_1\big(U_k(u_1)\big),\quad
(2\le k \le n-1),\\[1ex]
 A(u_1)&:=&
 \Big(\frac{1}{h}(U_n-U_{n-1})(u_1)+\frac{h}{2}g_1\big(U_n(u_1)\big)\Big)/ g_2\big(U_n(u_1)\big).
\end{array}
\end{equation}
Arguing recursively, one deduces the following lemma (cf.
\cite[Remark 20]{DeDrMa05}).
\begin{lemma}\label{Lema: positividad}
For any $u_1 > 0$, the following assertions hold:
\begin{enumerate}
\item \label{Lema: positividad - item 1}
$(U_k-U_{k-1})(u_1)=h^2\Big(\frac{1}{2} g_1(u_1)+\sum_{j=2}^{k-1}g_1\big(U_{j}(u_1)\big)\Big)
> 0$,
\item  \label{Lema: positividad - item 2} $U_k(u_1)=u_1 +
h^2\Big(\frac{k-1}{2}g_1(u_1) + \sum_{j=2}^{k-1}(k-j)g_1\big(U_{j}(u_1)\big)\Big)
> 0$,
\item  \label{Lema: positividad - item 3} $(U_k'-U_{k-1}')(u_1)
= h^2\big(\frac{1}{2}g_1'(u_1) + \sum_{j=2}^{k-1} g_1'\big(U_{j}(u_1)\big)
U_{j}'(u_1)\big) > 0$,
\item \label{Lema: positividad - item 4} $U_k'(u_1) = 1 +
h^2\big(\frac{k-1}{2}g_1'(u_1) + \sum_{j=2}^{k-1}(k-j)g_1'\big(U_{j}(u_1)\big)
U_{j}'(u_1)\big) > 1$,
\end{enumerate}
for $2\le k\le n$.
\end{lemma}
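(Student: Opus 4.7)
The plan is to establish all four items simultaneously by induction on $k$ for $2 \le k \le n$, exploiting the recursion $U_{k+1}(u_1) = 2U_k(u_1) - U_{k-1}(u_1) + h^2\, g_1(U_k(u_1))$ from (\ref{ec: relaciones recursivas}). The base case $k=2$ is immediate from the definition $U_2(u_1) = u_1 + (h^2/2)\,g_1(u_1)$: items (i) and (ii) hold with empty sums, and differentiating once in $u_1$ yields items (iii) and (iv) with $U_2'(u_1) = 1 + (h^2/2)\,g_1'(u_1)$. Strict positivity at $k=2$ uses only that $g_1(0)=0$ together with $g_1' > 0$ on $(0,\infty)$, which forces $g_1(u_1) > 0$ and $g_1'(u_1) > 0$.

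For the inductive step, I first rewrite the recursion as $U_{k+1} - U_k = (U_k - U_{k-1}) + h^2\,g_1(U_k)$; substituting the formula from (i) at level $k$ immediately yields (i) at level $k+1$. I then telescope $U_{k+1} = u_1 + \sum_{\ell=2}^{k+1}(U_\ell - U_{\ell-1})$ and substitute (i) inside, switching the order of summation: the term $g_1(U_j)$ appears in $U_\ell - U_{\ell-1}$ exactly for $\ell \in \{j+1,\ldots,k+1\}$, producing the coefficient $k+1-j$ required by (ii). Items (iii) and (iv) follow by differentiating the recursion in $u_1$, which gives $U_{k+1}' - U_k' = (U_k' - U_{k-1}') + h^2\,g_1'(U_k)\,U_k'$, and then repeating the identical telescoping argument.

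Strict positivity of every expression is then a consequence of the inductive hypothesis: $U_j(u_1) > 0$ for $j \le k$ by (ii), so $g_1(U_j(u_1)) > 0$ and $g_1'(U_j(u_1)) > 0$ by the sign assumptions on $g_1$; moreover $U_j'(u_1) > 1 > 0$ by (iv). Hence each of the quantities in (i)--(iv) at level $k+1$ is exhibited as a sum of strictly positive terms, closing the induction.

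I do not expect genuine difficulty here; the argument is essentially a careful bookkeeping exercise. The only point requiring attention is verifying that the double-sum reordering produces exactly the multiplicity $k-j$ advertised in (ii) and (iv). The sign hypotheses on $g_1$ are used only in their most elementary form, and the stronger conditions $g_1'' > 0$ and $g_1''' \ge 0$ stated in the preamble to the lemma play no role at this stage and will presumably be needed later for the smoothness and uniqueness results.
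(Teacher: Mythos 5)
Your proof is correct and follows exactly the recursive argument that the paper gestures at (the paper simply says ``Arguing recursively, one deduces the following lemma'' and refers to a remark in a cited work); the telescoping and reindexing are handled carefully and the positivity claims rest on precisely the hypotheses $g_1(0)=0$ and $g_1'>0$, as you note.
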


For the proof of the next lema we need the following technical result

\begin{remark}\label{Remark: Crecimiento Lagrange}
Let $f_1, f_2 ,f_3 \in \mathcal{C}^2(\Rpos)$ be positive functions such that
\begin{itemize}
\item $f_1''(x)>0$,
\item $f_2'(x)>0$, $f_3'(x)>0$,
\item $f_2(x) > f_3(x)$,
\end{itemize}
for all $x>0$. Let $F:\Rpos \rightarrow \R$ be the function defined by
$$
F(x):=\frac{f_1\big(f_2(x)\big)-f_1\big(f_3(x)\big)}{f_2(x)-f_3(x)}.
$$
Then $F'(x)>0$ for all $x>0$.
\end{remark}
\begin{proof} Fix $x>0$. By the definition of $F$ we have that
$$
\begin{array}{rcl}
F'(x)\big(f_2(x)-f_3(x)\big) & = &
\Big(f'_1\big(f_2(x)\big)f'_2(x)-f'_1\big(f_3(x)\big)f'_3(x)\Big)\\[1ex]
& & - F(x) \big(f'_2(x)-f'_3(x)\big)
\end{array}
$$
holds. From the Mean Value Theorem, there exists $\xi \in \big(f_3(x),f_2(x)\big)$ with $F(x)=f'_1(\xi)$. Therefore
$$
F'(x)\big(f_2(x)-f_3(x)\big)  =
\Big(f'_1\big(f_2(x)\big)-f'_1(\xi)\Big)f'_2(x)+\Big(f'_1(\xi)-f'_1\big(f_3(x)\big)\Big)f'_3(x).
$$
Since $f_1'$, $f_2$ and $f_3$ are strictly increasing functions, we conclude that $F'(x)>0$.
\end{proof}

Now we prove an important result for the existence and uniqueness of the solutions of (\ref{ec:
sistema_A})

\begin{lemma}\label{Lema: Crecimientos}
For any $u_1 > 0$, the following assertions hold:
\begin{enumerate}
\item \label{Lema: Crecimientos - item 1}
$\Big(\frac{U_k-U_{k-1}}{g_1(U_{k})}\Big)'(u_1)< 0$,
\item  \label{Lema: Crecimientos - item 2}
$\Big(\frac{U_k-U_{1}}{g_1(U_{k})}\Big)'(u_1)< 0$,
\item  \label{Lema: Crecimientos - item 3}
$\Big(\frac{U_{k}-U_{k-1}}{U_{k}-U_{1}}\Big)'(u_1) \ge 0$,
\item  \label{Lema: Crecimientos - item 4}
$\Big(\frac{g_1(U_k)}{g_1(U_1)}\Big)'(u_1) > 0$,
\end{enumerate}
for $2\le k\le n$.
\end{lemma}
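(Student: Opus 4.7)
My plan is to prove the four items by induction on $k$, in the logical order (iv), (i), (iii), (ii), with Remark~\ref{Remark: Crecimiento Lagrange} (the ``Lagrange'' remark), the recursion $(U_{k+1}-U_k)-(U_k-U_{k-1})=h^2 g_1(U_k)$, and the convexity of $g_1$ as the three main ingredients. For (iv), I would proceed by induction on $k$ via the factorization
$$\frac{g_1(U_k)}{g_1(U_1)}-1=\frac{g_1(U_k)-g_1(U_1)}{U_k-U_1}\cdot\frac{U_k-U_1}{g_1(U_1)}.$$
The first factor is strictly increasing in $u_1$ by the Lagrange remark applied with $f_1=g_1$, $f_2=U_k$, $f_3=U_1=u_1$, whose hypotheses follow from $g_1''>0$ and Lemma~\ref{Lema: positividad}. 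By Lemma~\ref{Lema: positividad}(ii), the second factor equals $h^2\bigl(\frac{k-1}{2}+\sum_{j=2}^{k-1}(k-j)\,g_1(U_j)/g_1(U_1)\bigr)$, a positive constant for $k=2$ and a positive strictly increasing function for $k\ge 3$ by the inductive hypothesis on indices $j<k$; the product is then strictly increasing.

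For (i), set $\delta_k:=(U_k-U_{k-1})'/(U_k-U_{k-1})$ and $\beta_k:=g_1'(U_k)U_k'/g_1(U_k)$; logarithmic differentiation shows (i)$_k$ is equivalent to $\delta_k<\beta_k$. The base case $k=2$ reduces to (iv)$_{k=2}$ since $(U_2-U_1)/g_1(U_2)=(h^2/2)g_1(U_1)/g_1(U_2)$. For the inductive step, differentiating $(U_{k+1}-U_k)=(U_k-U_{k-1})+h^2 g_1(U_k)$ exhibits $\delta_{k+1}$ as a strict convex combination of $\delta_k$ and $\beta_k$ with positive weights $U_k-U_{k-1}$ and $h^2 g_1(U_k)$, so $\delta_{k+1}<\beta_k$ by the inductive hypothesis. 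Next, the convexity of $g_1$ (its tangent at $U_{k+1}$ evaluated at $U_k$ lies at or below $g_1(U_k)$) gives $g_1(U_{k+1})-g_1'(U_{k+1})(U_{k+1}-U_k)\le g_1(U_k)$; substituting $U_{k+1}'=U_k'+(U_{k+1}-U_k)'$ and rearranging reduces the desired $\delta_{k+1}<\beta_{k+1}$ to the sufficient condition $\delta_{k+1}<g_1'(U_{k+1})U_k'/g_1(U_k)$. Since $g_1'(U_{k+1})>g_1'(U_k)$, this right-hand side strictly exceeds $\beta_k>\delta_{k+1}$, so the condition holds and (i)$_{k+1}$ follows.

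Items (iii) and (ii) then follow from (i) with no further induction. For (iii), write
$$\frac{U_k-U_{k-1}}{U_k-U_1}=\frac{1}{1+\sum_{j=2}^{k-1}d_j/d_k},\qquad d_l:=U_l-U_{l-1},$$
and note $d_{l-1}/d_l=R_{l-1}/(R_{l-1}+h^2)$ with $R_l$ the quantity appearing in (i); by (i), each $R_{l-1}$ is strictly decreasing, each factor and hence each product $d_j/d_k=\prod_{l=j+1}^k d_{l-1}/d_l$ is strictly decreasing, so the sum decreases strictly and its reciprocal increases strictly. For (ii), write $(U_k-U_1)/g_1(U_k)=R_k\cdot(U_k-U_1)/(U_k-U_{k-1})$; the first factor is positive and strictly decreasing by (i), the second positive and nonincreasing by (iii), and their product is therefore strictly decreasing.

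The main obstacle is the inductive step of (i). A direct expansion of the form $(U_k-U_{k-1})/g_1(U_k)=h^2\sum_{j<k}c_j\,g_1(U_j)/g_1(U_k)$ would reduce the problem to showing $g_1(U_j)/g_1(U_k)$ decreasing for every $j<k$, equivalently that $\beta_k$ is monotone in $k$; this stronger statement does not appear to admit a simple inductive proof. The convexity-based sufficient condition above sidesteps the need to compare consecutive $\beta_k$'s by bounding $\delta_{k+1}$ against $g_1'(U_{k+1})U_k'/g_1(U_k)$, a quantity that is automatically larger than the $\beta_k$ inherited from the inductive hypothesis.
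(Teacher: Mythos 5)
Your proof is correct, but it takes a genuinely different route from the paper on the central item (i), and it organizes the logical dependencies among the four items differently. The paper runs a single simultaneous induction on $k$ over all four statements, using the difference-quotient functions $L_{j,i}(u_1):=\frac{g_1(U_j)-g_1(U_i)}{U_j-U_i}(u_1)$ (which are increasing by the Lagrange remark) as the unifying device: it first derives item (iii) for $k+1$ from (ii) and (iii) at $k$, then writes
$\frac{U_{k+1}-U_{k}}{g_1(U_{k+1})}=\Big(\big(\frac{U_{k}-U_{k-1}}{g_1(U_{k})}+h^2\big)^{-1}+L_{k+1,k}\Big)^{-1}$
and analogous algebraic identities for (ii) and (iv), from which the inductive step for (i), (ii), (iv) falls out by pure monotonicity of sums, reciprocals, and products. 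Your proof instead treats (iv) and (i) as two standalone inductions and derives (iii) and (ii) afterwards as corollaries of (i) with no further induction. For (iv) you use essentially the same $L_{k,1}$ factorization the paper does, so the two proofs of that item coincide in substance. The real divergence is item (i): rather than the paper's clean reciprocal identity in terms of $L_{k+1,k}$, you pass to logarithmic derivatives $\delta_k$ and $\beta_k$, observe that $\delta_{k+1}$ is a convex combination of $\delta_k$ and $\beta_k$, and then close the gap from $\delta_{k+1}<\beta_k$ to $\delta_{k+1}<\beta_{k+1}$ using the convexity tangent bound $g_1(U_{k+1})-g_1'(U_{k+1})(U_{k+1}-U_k)\le g_1(U_k)$ together with the monotonicity of $g_1'$. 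I checked that chain carefully: the reduction to $\delta_{k+1}<g_1'(U_{k+1})U_k'/g_1(U_k)$ is exact, the sign of $\delta_{k+1}$ and the possible negativity of the tangent term are handled correctly, and the final comparison $g_1'(U_{k+1})>g_1'(U_k)$ closes it. Your non-inductive derivations of (iii) (via $d_j/d_k=\prod_{l=j+1}^k R_{l-1}/(R_{l-1}+h^2)$) and (ii) (as the product of the quantities in (i) and the reciprocal of the quantity in (iii)) are also sound and somewhat crisper in organization than the paper's, which re-derives (ii) and (iii) inside the joint induction. The trade-off is that the paper's treatment of (i) is algebraically shorter once the $L_{j,i}$ machinery is set up, while yours is more analytic and requires the convexity of $g_1$ explicitly rather than only through the Lagrange remark; both approaches, as you correctly observe, avoid the dead end of trying to show $g_1(U_j)/g_1(U_k)$ is decreasing in $u_1$ for every $j<k$.
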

\begin{proof} Let $L_{j,i}:\Rpos \rightarrow \R$ be the function defined by
$$
L_{j,i}(u_1):=\frac{g_1(U_j)-g_1(U_i)}{U_j-U_i}(u_1),
$$
where $1\le i < j \le n$. From Remark \ref{Remark: Crecimiento Lagrange} and Lemma \ref{Lema: positividad}, we deduce that
\begin{equation}\label{ec: Lprima positiva}
L'_{j,i}(u_1)=\Big(\frac{g_1(U_j)-g_1(U_i)}{U_j-U_i}\Big)'(u_1)>0.
\end{equation}
By (\ref{ec: relaciones recursivas}) we have
\begin{eqnarray*}
\frac{U_2-U_1}{g_1(U_2)}(u_1) &=& \Big(\frac{2}{h^2} + L_{2,1}(u_1)\Big)^{-1},\\[1ex]
 \frac{g_1(U_2)}{g_1(U_1)}(u_1) &=& 1 + \frac{h^2}{2} L_{2,1}(u_1).
\end{eqnarray*}
Combining these identities with (\ref{ec: Lprima positiva}) we obtain (\ref{Lema: Crecimientos - item 1}), (\ref{Lema: Crecimientos - item 2}), (\ref{Lema: Crecimientos - item 3}) and (\ref{Lema: Crecimientos - item 4}) for $k=2$. Now, arguing inductively, suppose that our statement is true for a given $k\ge 2$. From (\ref{ec: relaciones recursivas}) we have:
\begin{eqnarray*}
\Big(\frac{U_{k+1}-U_{k}}{U_{k+1}-U_{1}}\Big)(u_1) &=& \Big(1 +
\frac{U_{k}-U_{1}}{U_{k+1}-U_{k}}\Big)^{-1}(u_1)\\[1ex]
 &=& \Big(1 + \Big(\frac{U_{k}-U_{k-1}}{U_{k}-U_{1}} +
 \frac{g_1(U_{k})h^2}{U_{k}-U_{1}} \Big)^{-1}\Big)^{-1}(u_1).
\end{eqnarray*}
Applying the inductive hypotheses we deduce that (\ref{Lema: Crecimientos - item 3}) holds for $k+1$. On the other hand, by Lemma \ref{Lema: positividad} and (\ref{ec: relaciones recursivas}) we have that
\begin{eqnarray*}
\frac{U_{k+1}\!-\!U_{k}}{g_1(U_{k+1})}(u_1) \!\!\!\! &=& \!\!\!\! \Big(\frac{g_1(U_{k})}{U_{k+1}\!-\!U_{k}} +
L_{k+1,k} \Big)^{-1}\!\!\!\! (u_1)\\[1ex]
    &=& \!\!\!\! \Big(\!\Big(\frac{U_{k}\!-\!U_{k-1}}{g_1(U_{k})} + h^2\Big)^{-1} +
    L_{k+1,k} \Big)^{-1}\!\!\!\! (u_1),\\[1ex]
\frac{U_{k+1}\!-\!U_1}{g_1(U_{k+1})}(u_1) \!\!\!\! &=& \!\!\!\! \Big(\frac{g_1(U_{k})}{U_{k+1}\!-\!U_{1}} + L_{k+1,k} \frac{U_{k+1}\!-\!U_{k}}{U_{k+1}\!-\!U_{1}} \Big)^{-1}\!\!\!\! (u_1)\\[1ex]
    &=& \!\!\!\! \Big(\!\!\Big(\frac{U_{k}\!-\!U_{k-1}}{g_1(U_{k})} +
    \frac{U_{k}\!-\!U_1}{g_1(U_{k})} + h^2\Big)^{-1} \!\!\!\!
    + L_{k+1,k} \frac{U_{k+1}\!-\!U_{k}}{U_{k+1}\!-\!U_{1}} \Big)^{-1}\!\!\!\! (u_1),\\[1ex]
\frac{g_1(U_{k+1})}{g_1(U_1)}(u_1) \!\!\!\! &=& \!\!\!\! \Big(L_{k+1,1} \frac{U_{k+1}\!-\!U_1}{g_1(U_1)}\Big)(u_1) + 1\\[1ex]
    &=& \!\!\!\! h^2\Big(L_{k+1,1}\Big(\frac{k-1}{2} + \sum_{j=2}^{k}(k-j)\frac{g_1(U_j)}{g_1(U_1)}\Big)\!\!\Big)(u_1) + 1.
\end{eqnarray*}
hold. Combining the inductive hypothesis with (\ref{ec: Lprima positiva}) and (\ref{Lema: Crecimientos - item 3}), for $k+1$, we conclude that (\ref{Lema: Crecimientos - item 1}), (\ref{Lema: Crecimientos - item 2}) and (\ref{Lema: Crecimientos - item 4}) hold for $k+1$.
\end{proof}

%
%
\subsection{Existence and uniqueness}\label{subseccion: existencia_y_unicidad}
Let $P:(\Rpos)^2\to\R$ be the nonlinear map defined by
\begin{equation}\label{ec: funcion_G}
P(\alpha,u_1) := \mbox{$\frac{1}{h}$} \big(U_{n-1}(u_1)-U_n(u_1)\big) - \mbox{$\frac{h}{2}$} g_1\big(U_n(u_1)\big) + \alpha g_2\big(U_n(u_1)\big).
\end{equation}
Observe that $P(A,U_1)=0$ represents the minimal equation satisfied by the coordinates $(\alpha,u_1)$ of any (complex)
solution of the nonlinear system (\ref{ec: sistema_A}). Therefore, for fixed $\alpha > 0 $, the positive roots of $P(\alpha,U_1)$ are the values of $u_1$ we want to obtain. Furthermore, from the parametrizations (\ref{ec: relaciones recursivas}) of the coordinates $u_2 \klk u_n$ of a given solution $(\alpha,u_1\klk u_n) \in (\Rpos)^{n+1}$ of (\ref{ec: sistema_A}) in terms of $u_1$, we conclude that the number of positive roots of $P(\alpha,U_1)$ determines the number of positive solutions of (\ref{ec: sistema_A}) for such a value of $\alpha$.

Therefore, we analyze the existence of positive roots of the function $P(\alpha,U_1)$ for
values $\alpha>0$. Let $g:\Rpos \rightarrow \Rpos$ be the function defined by
\begin{equation}\label{ec: definicion g}
 g(x):=\frac{g_1}{g_2}(x).
\end{equation}
By Lemma \ref{Lema: positividad}(\ref{Lema: positividad - item 1}) we have that
$$
\begin{array}{rcl}
P(\alpha,u_1) &=& \alpha g_2\big(U_n(u_1)\big) -
h\big(\mbox{$\frac{1}{2}$}g_1(u_1) + \sum_{j=2}^{n-1}g_1\big(U_j(u_1)\big) + \mbox{$\frac{1}{2}$} g_1\big(U_n(u_1)\big)\big)\\[1ex]
 & \ge & \alpha g_2\big(U_n(u_1)\big)-g_1\big(U_n(u_1)\big) =
g_2\big(U_n(u_1)\big)\Big(\alpha - g\big(U_n(u_1)\big)\Big)
\end{array}
$$
holds for any $u_1 > 0$.

Suppose that $g$ is surjective. Therefore, there exist $u_1^*,u_1^{**}> 0$ such that $g\big(U_n(u_1^*)\big)=\alpha$
and $g\big(U_n(u_1^{**})\big)= 2\alpha / h$ hold. From this choice of $u_1^*$ and $u_1^{**}$ and the inequality
above we deduce
$$
\begin{array}{rcl}
P(\alpha,u_1^*) &\ge& g_2\big(U_n(u_1^*)\big)\Big(\alpha
-g\big(U_n(u_1^*)\big)\Big)=0,\\[2ex]
P(\alpha,u_1^{**}) &=& \frac{1}{h}\big(U_{n-1}(u_1^{**})-U_n(u_1^{**})\big)
+g_2\big(U_n(u_1^{**})\big)\Big(\alpha - \frac{h}{2}g\big(U_n(u_1^{**})\big)\Big)\\
 &=& \frac{1}{h}\big(U_{n-1}(u_1^{**})-U_n(u_1^{**})\big)\le 0.
\end{array}$$
Since $P(A,U_1)$ is a continuous function in $(\R_{>0})^2$, from
the previous considerations we obtain the following result.
\begin{proposition}\label{Prop: existencia}
Fix $\alpha> 0$ and $n \in \N$. If the function $g$ of (\ref{ec: definicion g}) is surjective, then {\rm (\ref{ec: sistema_A})} has a positive solution with $A=\alpha$.
\end{proposition}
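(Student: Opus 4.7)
The plan is to reduce the proposition to a sign-change argument for the scalar function $u_1 \mapsto P(\alpha, u_1)$ defined in (\ref{ec: funcion_G}) and then invoke the Intermediate Value Theorem. The parametrizations (\ref{ec: relaciones recursivas}) turn the first $n-1$ equations of (\ref{ec: sistema_A}) into explicit definitions of $U_2(u_1), \ldots, U_n(u_1)$, and all of these coordinates are automatically positive whenever $u_1 > 0$ by Lemma \ref{Lema: positividad}(\ref{Lema: positividad - item 2}). Since the last equation of (\ref{ec: sistema_A}) is exactly $P(\alpha, u_1) = 0$, it suffices to exhibit $u_1^*, u_1^{**} > 0$ with $P(\alpha, u_1^*) \geq 0 \geq P(\alpha, u_1^{**})$.

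For the first inequality I would rewrite $\frac{1}{h}(U_{n-1} - U_n)$ using Lemma \ref{Lema: positividad}(\ref{Lema: positividad - item 1}) and then bound each $g_1(U_j)$ from above by $g_1(U_n)$, exploiting monotonicity of $U_k$ in $k$ and the hypothesis that $g_1$ is increasing; since $h(n-1) = 1$ this collapses to $P(\alpha, u_1) \geq g_2\bigl(U_n(u_1)\bigr)\bigl(\alpha - g(U_n(u_1))\bigr)$. Surjectivity of $g$ furnishes some $v^* > 0$ with $g(v^*) = \alpha$; because $u_1 \mapsto U_n(u_1)$ is continuous and surjective onto $\R_{>0}$ (it dominates $u_1$ and tends to $0$ as $u_1 \to 0^+$), there is $u_1^*$ with $U_n(u_1^*) = v^*$, so that $P(\alpha, u_1^*) \geq 0$. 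For the opposite direction I would pick $u_1^{**}$ with $g\bigl(U_n(u_1^{**})\bigr) = 2\alpha/h$; this forces $\alpha g_2(U_n(u_1^{**})) - \frac{h}{2} g_1(U_n(u_1^{**})) = 0$, reducing $P(\alpha, u_1^{**})$ to $\frac{1}{h}(U_{n-1}(u_1^{**}) - U_n(u_1^{**})) \leq 0$, again by Lemma \ref{Lema: positividad}(\ref{Lema: positividad - item 1}). Continuity of $P(\alpha, \cdot)$ and the IVT then deliver the desired positive root.

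The delicate point is not the IVT step itself but securing these two endpoints: surjectivity of $g$ alone only picks values in the codomain of $U_n$, so one must verify separately that $u_1 \mapsto U_n(u_1)$ is a continuous surjection onto $\R_{>0}$, which is where the recursive formulas and Lemma \ref{Lema: positividad} are again needed. Notably, none of the finer monotonicity information from Lemma \ref{Lema: Crecimientos} enters the existence proof; I expect that lemma to become relevant only for the uniqueness half of the story.
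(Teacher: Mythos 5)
Your proposal is correct and follows essentially the same path as the paper: reduce to a sign change of $P(\alpha,\cdot)$, obtain the lower bound $P(\alpha,u_1)\ge g_2(U_n(u_1))\bigl(\alpha-g(U_n(u_1))\bigr)$ from Lemma~\ref{Lema: positividad}, choose $u_1^*,u_1^{**}$ via surjectivity so that $g(U_n(u_1^*))=\alpha$ and $g(U_n(u_1^{**}))=2\alpha/h$, and apply the Intermediate Value Theorem. The only difference is that you explicitly flag and justify the surjectivity of $u_1\mapsto U_n(u_1)$ onto $\Rpos$, a step the paper uses tacitly.
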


In order to establish the uniqueness, we prove that the homotopy path that we obtain by moving the parameter $\alpha$ in $\Rpos$ is smooth. For this purpose, we show that the rational function $A(U_1)$ implicitly defined by the
equation $P(A,U_1)=0$ is decreasing. We observe that an explicit expression for this function in terms of $U_1$ is obtained in (\ref{ec: relaciones recursivas}).
\begin{theorem}\label{Teo: smoothness homotopy path}
Let $A(U_1)$ be the rational function of (\ref{ec: relaciones recursivas}). If the function $g$ of (\ref{ec: definicion g}) is decreasing, then the condition $A'(u_1) < 0$ is satisfied for every $u_1 \in \R_{>0}$.
\end{theorem}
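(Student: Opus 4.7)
The plan is to prove that $A'(u_1)<0$ by a direct decomposition of $A(u_1)$ into a sum of two manifestly decreasing positive pieces, using only the monotonicity lemmas already established. First, I would rewrite the defining expression (\ref{ec: relaciones recursivas}) for $A$ by dividing numerator and denominator and inserting the function $g=g_1/g_2$ in both summands, obtaining
$$
A(u_1) \;=\; \frac{1}{h}\cdot\frac{U_n(u_1)-U_{n-1}(u_1)}{g_1\bigl(U_n(u_1)\bigr)}\cdot g\bigl(U_n(u_1)\bigr)\;+\;\frac{h}{2}\, g\bigl(U_n(u_1)\bigr).
$$
The point of this rewriting is that each factor on the right is among the quantities already controlled by Lemmas \ref{Lema: positividad} and \ref{Lema: Crecimientos}.

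Next I would verify, factor by factor, that both terms are strictly decreasing in $u_1$. The quotient $(U_n-U_{n-1})/g_1(U_n)$ is positive and strictly decreasing by Lemma \ref{Lema: Crecimientos}(\ref{Lema: Crecimientos - item 1}). On the other hand, $U_n(u_1)$ is positive and strictly increasing in $u_1$ by Lemma \ref{Lema: positividad}(\ref{Lema: positividad - item 4}), while $g$ is strictly decreasing on $\R_{>0}$ by hypothesis; therefore $g(U_n(u_1))$ is strictly positive and strictly decreasing in $u_1$. Since both factors in the first summand are positive and strictly decreasing, their product is strictly decreasing. The second summand $\frac{h}{2}\, g(U_n(u_1))$ is strictly decreasing for the same reason.

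Finally, $A(u_1)$ is a sum of two strictly decreasing functions, whence $A'(u_1)<0$ for every $u_1\in\R_{>0}$, as desired. I do not anticipate a substantive obstacle: the only nontrivial ingredient is Lemma \ref{Lema: Crecimientos}(\ref{Lema: Crecimientos - item 1}), which has already been proved; the remaining content is essentially the algebraic observation that, after factoring $g$ out of both summands, the monotonicity of $A$ follows from combining the monotonicity of $U_n$ with the hypothesis on $g$ and the lemma.
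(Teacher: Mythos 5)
Your proof is correct and follows essentially the same route as the paper's, which factors $g(U_n)$ out and applies the product rule; distributing $g(U_n)$ into the two summands and arguing each is decreasing is the same computation in a different grouping, relying on the same ingredients (Lemma~\ref{Lema: Crecimientos}(\ref{Lema: Crecimientos - item 1}), Lemma~\ref{Lema: positividad}(\ref{Lema: positividad - item 4}), and the monotonicity of $g$). One small sharpening: to conclude the strict pointwise inequality $A'(u_1)<0$ you should invoke that Lemma~\ref{Lema: Crecimientos}(\ref{Lema: Crecimientos - item 1}) gives a strictly negative \emph{derivative} for the quotient factor (not merely strict monotonicity), since a sum of two strictly decreasing differentiable functions can in general have a vanishing derivative at isolated points.
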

\begin{proof}
Let $U_1, U_2\klk U_n, A$ be the functions defined in (\ref{ec: relaciones recursivas}). Observe that $A$ can be rewritten as follows:
$$
 A = g(U_n)\Big(\frac{U_n-U_{n-1}}{h g_1(U_n)}+\frac{h}{2}\Big).
$$
Taking derivatives with respect to $U_1$, we have
$$
A'= g'(U_n)U_n'\Big(\frac{U_n-U_{n-1}}{h
g_1(U_n)}+\frac{h}{2}\Big)+ g(U_n)\Big(\frac{U_n-U_{n-1}}{h
g_1(U_n)}\Big)'.
$$
Fix $u_1> 0$. From Lemma \ref{Lema: positividad} we see that $g(U_n)(u_1)$ is positive. Furthermore, by Lemma \ref{Lema: Crecimientos}, we have
$$
\Big(\frac{U_n-U_{n-1}}{h g_1(U_n)}\Big)'(u_1) < 0.
$$
These remarks show that
\begin{equation}\label{ec: Cota Aprima}
A'(u_1) < \Big(g'(U_n)U_n'\Big(\frac{U_n-U_{n-1}}{h g_1(U_n)} + \frac{h}{2}\Big)\Big)(u_1) = \Big(g'(U_n)U_n' \frac{A}{g(U_n)}\Big)(u_1).
\end{equation}
From Lemma \ref{Lema: positividad}, we deduce that $U_n'(u_1)$ and $A(u_1)$ are positive. Combining this affirmation  with the monotonicity of $g$ we deduce the statement of the theorem.
\end{proof}

Now we state and prove the main result of this section.
\begin{theorem}\label{Teo: uniqueness}
Let be given $\alpha > 0$ and $n \in \N$. If the function $g$ of (\ref{ec: definicion g}) is surjective and decreasing, then {\rm (\ref{ec: sistema_alpha})} has a unique positive solution.
\end{theorem}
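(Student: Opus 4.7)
The plan is to reduce the claim about solutions of the full $n$-dimensional system to a claim about a single scalar equation, and then to combine the existence result (Proposition \ref{Prop: existencia}) with the monotonicity result (Theorem \ref{Teo: smoothness homotopy path}).

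First I would recall the reduction already set up in Subsection \ref{subseccion: existencia_y_unicidad}: the recursions in (\ref{ec: relaciones recursivas}) show that each positive solution $(u_1,\ldots,u_n)$ of (\ref{ec: sistema_alpha}) is uniquely determined by its first coordinate $u_1$, and that $u_1$ must satisfy $P(\alpha,u_1)=0$, or equivalently $A(u_1)=\alpha$, where $A$ is the rational function from (\ref{ec: relaciones recursivas}). Conversely, if $u_1>0$ satisfies $A(u_1)=\alpha$, then defining $u_2,\ldots,u_n$ by (\ref{ec: relaciones recursivas}) and invoking Lemma \ref{Lema: positividad}(\ref{Lema: positividad - item 2}) gives a positive solution of (\ref{ec: sistema_alpha}). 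Hence the positive solutions of (\ref{ec: sistema_alpha}) with first coordinate $u_1$ are in bijection with the positive roots of $A(U_1)-\alpha$.

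Next I would invoke existence and uniqueness for this scalar equation. Since $g$ is surjective, Proposition \ref{Prop: existencia} guarantees at least one positive $u_1^\star$ with $A(u_1^\star)=\alpha$. Since $g$ is decreasing, Theorem \ref{Teo: smoothness homotopy path} gives $A'(u_1)<0$ for every $u_1\in\R_{>0}$, so $A$ is strictly decreasing on $\R_{>0}$ and therefore injective there. Combining both facts, the equation $A(u_1)=\alpha$ has exactly one positive solution $u_1^\star$.

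Finally, via the parametrizations (\ref{ec: relaciones recursivas}), this unique $u_1^\star$ produces exactly one positive $n$-tuple $(u_1^\star,\ldots,u_n^\star)$ solving (\ref{ec: sistema_alpha}), proving the theorem. No step poses a real obstacle here: all of the work has been done in Lemmas \ref{Lema: positividad} and \ref{Lema: Crecimientos}, in Proposition \ref{Prop: existencia}, and in Theorem \ref{Teo: smoothness homotopy path}. The only thing to verify carefully is that the bijection between positive roots of $A(U_1)-\alpha$ and positive solutions of (\ref{ec: sistema_alpha}) is indeed a bijection, which follows from the fact that the recursive formulas (\ref{ec: relaciones recursivas}) both produce positive coordinates (Lemma \ref{Lema: positividad}) and reproduce the first $n-1$ equations of (\ref{ec: sistema_alpha}), while the last equation is exactly $P(\alpha,u_1)=0$.
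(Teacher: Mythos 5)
Your proof is correct and takes essentially the same approach as the paper: reduce to the scalar equation $A(u_1)=\alpha$, use Proposition \ref{Prop: existencia} for existence, and use $A'(u_1)<0$ from Theorem \ref{Teo: smoothness homotopy path} for uniqueness (the paper phrases the last step as a contradiction argument, but the content is identical).
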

\begin{proof}
Proposition \ref{Prop: existencia} shows that (\ref{ec: sistema_alpha}) has solutions in $(\R_{>0})^n$ for any $\alpha
>0$ and any $n\in\N$. Therefore, there remains to show the uniqueness assertion.

By Theorem \ref{Teo: smoothness homotopy path}, the condition $A'(u_1)<0$ holds for every $u_1\in \R_{>0}$. Arguing by contradiction, assume that there exist two distinct positive solutions $(u_1\klk u_n)$, $(\widehat{u}_1\klk\widehat{u}_n)\in(\Rpos)^{n}$ of (\ref{ec: sistema_alpha}) for $\alpha$. This implies that $u_1\not=\widehat{u}_1$ and $A(u_1)=A(\widehat{u}_1)$, where $A(U_1)$ is defined in (\ref{ec: relaciones recursivas}).
But this contradicts the fact that $A'(u_1)<0$ holds in $\Rpos$, showing thus the theorem.
\end{proof}
%
\section{Bounds for the positive solution}\label{seccion: cotas}
In this section we obtain bounds for the positive solution of (\ref{ec: sistema_A}). More precisely, we find an interval containing the positive solution of (\ref{ec: sistema_A}) whose endpoints only depend on $\alpha$. These
bounds will allow us to establish an efficient procedure of approximation of this solution.
\begin{lemma}\label{Lema: relacion g1 g2 alpha}
Let $(\alpha,u) \in (\R_{>0})^{n+1}$ be a solution of {\rm (\ref{ec: sistema_A})} for $A=\alpha$. Then
$$
\alpha g_2(u_n) < g_1(u_n).
$$
\end{lemma}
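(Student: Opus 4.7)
The plan is to derive a closed-form expression for $\alpha g_2(u_n)$ by summing (equivalently, telescoping) the equations of (\ref{ec: sistema_A}), and then compare it term by term against $g_1(u_n)$ using the monotonicity statements already established in Lemma \ref{Lema: positividad}.

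First I would rewrite the last equation of (\ref{ec: sistema_A}) as $\alpha g_2(u_n) = \frac{1}{h}(u_n - u_{n-1}) + \frac{h}{2} g_1(u_n)$. Then I would invoke Lemma \ref{Lema: positividad}(\ref{Lema: positividad - item 1}) with $k = n$ to replace $\frac{1}{h}(u_n - u_{n-1})$ by $h\bigl(\frac{1}{2}g_1(u_1) + \sum_{j=2}^{n-1} g_1(u_j)\bigr)$. This yields the trapezoidal-rule identity
$$
\alpha g_2(u_n) \;=\; h\Bigl(\tfrac{1}{2} g_1(u_1) + \sum_{j=2}^{n-1} g_1(u_j) + \tfrac{1}{2} g_1(u_n)\Bigr),
$$
with $h = 1/(n-1)$.

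Next I would use Lemma \ref{Lema: positividad}(\ref{Lema: positividad - item 1}) again to conclude that the sequence $u_1 < u_2 < \cdots < u_n$ is strictly increasing. Since $g_1$ is strictly increasing on $\R_{>0}$, this gives $g_1(u_j) < g_1(u_n)$ for every $1 \le j \le n-1$. Summing these strict inequalities with the stated weights provides
$$
\tfrac{1}{2} g_1(u_1) + \sum_{j=2}^{n-1} g_1(u_j) + \tfrac{1}{2} g_1(u_n) \;<\; \Bigl(\tfrac{1}{2} + (n-2) + \tfrac{1}{2}\Bigr)\, g_1(u_n) \;=\; (n-1)\, g_1(u_n).
$$
Multiplying by $h = 1/(n-1)$ and combining with the identity above yields exactly $\alpha g_2(u_n) < g_1(u_n)$.

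There is no real obstacle: the only thing to check is that the strict inequality is non-vacuous, which is guaranteed as soon as $n \ge 2$ (so that at least one of the terms $g_1(u_j)$ with $j < n$ appears with strictly positive weight; in fact the weight $\tfrac{1}{2}$ attached to $g_1(u_1) < g_1(u_n)$ alone already suffices). The argument is essentially a discrete analog of the fact that an average of values of a strictly increasing function is strictly less than its maximum, and it relies only on results already proved in the paper.
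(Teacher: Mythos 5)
Your proof is correct and takes essentially the same approach as the paper: derive the trapezoidal identity $\alpha g_2(u_n) = h\bigl(\tfrac{1}{2}g_1(u_1) + \sum_{j=2}^{n-1} g_1(u_j) + \tfrac{1}{2}g_1(u_n)\bigr)$ from the last equation together with Lemma \ref{Lema: positividad}(\ref{Lema: positividad - item 1}), then compare termwise using the strict monotonicity $u_1 < \cdots < u_n$ (again from item (\ref{Lema: positividad - item 1})) and the fact that $g_1$ is strictly increasing. The paper states this more tersely, but you have simply made explicit the weight-counting step it leaves implicit.
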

\begin{proof}
From the last equation of (\ref{ec: sistema_A}) for $A=\alpha$ and Lemma
\ref{Lema: positividad}(\ref{Lema: positividad - item 1}),
we obtain the identity
$$
\alpha g_2(u_n) = h\Big(\frac{1}{2} g_1(u_1) + g_1(u_2) + \cdots + g_1(u_{n-1}) +
\frac{1}{2} g_1(u_n)\Big).
$$
From the previous identity and Lemma \ref{Lema: positividad}(\ref{Lema: positividad - item 1}) we immediately deduce the statement of the lemma.
\end{proof}
From Lemma \ref{Lema: relacion g1 g2 alpha} we obtain the following corollary.
\begin{corollary}\label{Coro: cota sup un alpha}
Let $(\alpha,u) \in (\R_{>0})^{n+1}$ be a solution of {\rm
(\ref{ec: sistema_A})} for $A=\alpha$. If the function $g$ of (\ref{ec: definicion g}) is surjective and strictly decreasing, then
$$
u_n < g^{-1}(\alpha).
$$
\end{corollary}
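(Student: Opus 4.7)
The plan is to deduce the bound directly from Lemma \ref{Lema: relacion g1 g2 alpha}, using the hypotheses on $g$ to invert the resulting inequality. First I would rewrite the conclusion $\alpha g_2(u_n) < g_1(u_n)$ of that lemma by dividing through by $g_2(u_n)$. This division is legitimate because $u_n>0$ together with $g_2(0)=0$ and $g_2'>0$ on $\Rpos$ forces $g_2(u_n)>0$. The result is the inequality $\alpha < g_1(u_n)/g_2(u_n) = g(u_n)$.

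Next I would invoke the standing assumptions on $g$: since $g:\Rpos\to\Rpos$ is surjective and strictly decreasing, it is a bijection onto its image and admits a strictly decreasing inverse $g^{-1}$. Applying $g^{-1}$ to both sides of $\alpha < g(u_n)$ reverses the inequality, yielding $g^{-1}(\alpha) > g^{-1}(g(u_n)) = u_n$, which is exactly the statement of the corollary.

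There is essentially no obstacle here; the only care needed is in bookkeeping the sign of the monotonicity (applying a decreasing function flips the inequality) and in checking positivity of $g_2(u_n)$ so that the division is valid. Both are immediate from the standing hypotheses on $g_1, g_2$ and the positivity of the coordinates of the solution.
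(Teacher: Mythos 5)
Your proof is correct and follows exactly the route the paper intends: the paper states only that the corollary ``follows from Lemma \ref{Lema: relacion g1 g2 alpha},'' and you supply precisely the expected details (divide by $g_2(u_n)>0$, rewrite as $\alpha < g(u_n)$, apply the decreasing inverse $g^{-1}$).
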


Let $(\alpha,u) \in (\R_{>0})^{n+1}$ be a solution of {\rm (\ref{ec: sistema_A})} for $A=\alpha$. In the following lemma we obtain an upper bound of $u_n$ in terms of $u_1$ and $\alpha$.

\begin{lemma}\label{Lema: cota sup un alpha u1}
Let $(\alpha,u) \in (\R_{>0})^{n+1}$ be a solution of {\rm (\ref{ec: sistema_A})} for $A=\alpha$. If the function $g$ of (\ref{ec: definicion g}) is surjective and strictly decreasing, then $u_n < e^{M}u_1$ holds, with $M:=g_1'\big(g^{-1}(\alpha)\big)$.
\end{lemma}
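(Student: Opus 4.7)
The plan is to compare $(u_k)$ with the solution of a linear constant-coefficient second-order recursion that admits a closed form in $\cosh$, and then to dominate $\cosh(\sqrt{M})$ by $e^M$.

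First I would observe that $g_1(u_k)<Mu_k$ for every $1\le k\le n$. Indeed, the sequence $(u_k)$ is strictly increasing by Lemma~\ref{Lema: positividad}(\ref{Lema: positividad - item 1}), while Corollary~\ref{Coro: cota sup un alpha} yields $u_n<g^{-1}(\alpha)$; in particular each $u_k$ belongs to $(0,g^{-1}(\alpha))$. The hypothesis $g_1''>0$ makes $g_1'$ strictly increasing, hence $g_1'(t)<M$ on $(0,g^{-1}(\alpha))$, and combining this with $g_1(0)=0$ and the Mean Value Theorem produces the desired linear bound.

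Next I would introduce the comparison sequence $(v_k)_{k=1}^n$ defined by $v_1:=u_1$, $v_2:=u_1(1+h^2M/2)$ and $v_{k+1}:=(2+h^2M)v_k-v_{k-1}$ for $2\le k\le n-1$, and prove that $u_k\le v_k$ and $u_{k+1}-u_k\le v_{k+1}-v_k$ for every admissible $k$. Writing the recursion for $(u_k)$ in the incremental form $(u_{k+1}-u_k)-(u_k-u_{k-1})=h^2g_1(u_k)$ and applying the linear bound on $g_1$ together with the inductive hypothesis $u_k\le v_k$ gives $(u_{k+1}-u_k)-(u_k-u_{k-1})\le h^2Mv_k=(v_{k+1}-v_k)-(v_k-v_{k-1})$, and the two inequalities propagate simultaneously; the base cases $k=1,2$ follow from $u_2-u_1=h^2g_1(u_1)/2\le h^2Mu_1/2=v_2-v_1$.

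It remains to estimate $v_n$. The characteristic roots of the linear recursion for $(v_k)$ are $e^{\pm\theta}$, where $\theta>0$ is the unique positive number satisfying $\cosh\theta=1+h^2M/2$, and the initial conditions force $v_k=u_1\cosh((k-1)\theta)$. The inequality $\cosh t\ge 1+t^2/2$ evaluated at $t:=h\sqrt{M}$ gives $\cosh(h\sqrt{M})\ge\cosh\theta$, so $\theta\le h\sqrt{M}$; using $(n-1)h=1$ one concludes $v_n\le u_1\cosh(\sqrt{M})$. Finally, a term-by-term comparison of the power series of $\cosh(\sqrt{M})$ and $e^M$ (using $(2k)!>k!$ for $k\ge 1$) shows $\cosh(\sqrt{M})<e^M$ for every $M>0$, and chaining the estimates yields $u_n\le v_n<u_1e^M$.

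The step I expect to be most delicate is the comparison $u_k\le v_k$: a naive induction on this inequality alone stalls, because the recursion defining $(u_k)$ contains $-u_{k-1}$ with an adverse sign, so the natural estimate only produces $v_{k+1}-u_{k+1}\ge 2(v_k-u_k)-(v_{k-1}-u_{k-1})+h^2(Mv_k-g_1(u_k))$, which cannot be closed with a one-sided hypothesis. The remedy is to keep the increments $u_{k+1}-u_k$ under inductive control alongside $(u_k)$ itself; these increments are non-negative by Lemma~\ref{Lema: positividad}(\ref{Lema: positividad - item 1}) and obey a first-order monotone inequality that telescopes cleanly against $v_{k+1}-v_k$.
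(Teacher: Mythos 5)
Your proof is correct, and it follows a genuinely different route than the paper's. The paper linearizes $g_1$ via the Mean Value Theorem inside the closed-form expression $u_{k+1}=u_k+h^2\big(\tfrac12 g_1(u_1)+g_1(u_2)+\cdots+g_1(u_k)\big)$ from Lemma~\ref{Lema: positividad}(\ref{Lema: positividad - item 1}), bounds each $g_1'(\xi_j)$ by $M$ and each $u_j$ by $u_k$, uses $h^2k\le h$, and so arrives directly at the first-order estimate $u_{k+1}\le(1+hM)u_k$, which telescopes to $u_n\le(1+hM)^{n-1}u_1\le e^M u_1$. You instead keep the second-order structure: you majorize $g_1$ by the linear function $Mu$, compare the resulting sequence with the constant-coefficient recursion $v_{k+1}=(2+h^2M)v_k-v_{k-1}$ (a discrete version of $w''=Mw$), diagonalize it to get $v_k=u_1\cosh((k-1)\theta)$ with $\cosh\theta=1+h^2M/2$, and finish with $\theta\le h\sqrt M$ and $\cosh(\sqrt M)<e^M$. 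Your observation that a naive one-sided induction stalls because of the $-u_{k-1}$ term, and that one must track the increments $u_{k+1}-u_k$ alongside $u_k$ itself, is exactly the right fix — this is the standard way to compare solutions of discrete second-order equations, and it is the only delicate point in your argument. The trade-off is that the paper's proof is shorter and more elementary, while yours produces the sharper intermediate bound $u_n<u_1\cosh(\sqrt M)$, which is the natural discrete analogue of the exact solution of $w''=Mw$, $w'(0)=0$, on $[0,1]$; the $e^M$ in the statement is then just a simplification of $\cosh(\sqrt M)$, whereas in the paper it is the native outcome of the first-order telescoping estimate.
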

\begin{proof}
Let $(\alpha,u) \in (\R_{>0})^{n+1}$ be a solution of {\rm (\ref{ec: sistema_A})} for $A=\alpha$. Combining Lemma \ref{Lema: positividad}(\ref{Lema: positividad - item 1}) and the Mean Value Theorem, we obtain the following identities
$$
\begin{array}{rcl}
u_{k+1} &=& u_k + h^2 \Big(\frac{g_1(u_1)}{2}+ g_1(u_2) + \cdots + g_1(u_{k})\Big)\\[2ex]
        &=& u_k + h^2 \Big(\frac{g'_1(\xi_1)u_1}{2}+ g'_1(\xi_2)u_2 + \cdots + g'_1(\xi_k)u_k\Big)
\end{array}
$$
for $1 \le k \le (n-1)$, where $\xi_i \in [0,u_i]$ for $1\le i \le k$. Since $g'_1$ is an increasing function in $\Rpos$, combining Lemma \ref{Lema: positividad}(\ref{Lema: positividad - item 1}) and Corollary \ref{Coro: cota sup un alpha}, we obtain
$$
\begin{array}{rcl}
u_{k+1} &\le& u_k + h^2(g'_1(u_1)u_1+  \cdots + g'_1(u_k)u_k)\\[1ex]
        &\le& (1 + h g'_1\big(g^{-1}(\alpha)\big))u_k = (1 + h M)u_k
\end{array}
$$
for $1 \le k \le (n-1)$. Arguing recursively, we deduce that
$$
u_{n} \le  (1+Mh)^{n-1} u_1 \le e^{M} u_1.
$$
This completes the proof.
\end{proof}

In our next lemma we obtain a lower bound of $u_1$ in terms of $\alpha$.

\begin{lemma}\label{Lema: Cota inf u1 alpha}
Let $(\alpha,u) \in (\R_{>0})^{n+1}$ be a solution of {\rm
(\ref{ec: sistema_A})} for $A=\alpha$. If the function $g$ of (\ref{ec: definicion g}) is surjective and strictly decreasing, then
$$
u_1>g^{-1}\big(\alpha C(\alpha)\big)
$$
holds, where $C(\alpha) \ge 1$ is a constant such that
    $$
    \lim_{{\alpha}\rightarrow +\infty} C(\alpha) = 1.
    $$
\end{lemma}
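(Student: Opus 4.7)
The plan is to use the trapezoidal sum identity implicit in the last equation of (\ref{ec: sistema_A}) to derive a strict upper bound on $g(u_1)$, and then combine the estimates of Corollary \ref{Coro: cota sup un alpha} and Lemma \ref{Lema: cota sup un alpha u1} to establish the asymptotic behaviour of the bounding constant.

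First, from Lemma \ref{Lema: positividad}(\ref{Lema: positividad - item 1}) taken with $k=n$ together with the last equation of (\ref{ec: sistema_A}), one obtains (as in the proof of Lemma \ref{Lema: relacion g1 g2 alpha}) the identity
$$\alpha\, g_2(u_n)\;=\;h\Big(\frac{1}{2}g_1(u_1)+g_1(u_2)+\cdots+g_1(u_{n-1})+\frac{1}{2}g_1(u_n)\Big).$$
The weights on the right add up to $h(n-1)=1$, so this expresses $\alpha\, g_2(u_n)$ as a convex combination of $g_1(u_1),\ldots,g_1(u_n)$. Since $g_1$ is strictly increasing on $\Rpos$ and $u_j>u_1$ for every $j\ge 2$ by Lemma \ref{Lema: positividad}, this combination is strictly larger than $g_1(u_1)$. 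Dividing by $g_2(u_1)>0$ and setting $C(\alpha):=g_2(u_n)/g_2(u_1)$ gives $g(u_1)<\alpha\, C(\alpha)$, and applying the strictly decreasing inverse $g^{-1}$ yields $u_1>g^{-1}(\alpha\, C(\alpha))$. The inequality $C(\alpha)\ge 1$ is immediate since $u_n>u_1$ and $g_2$ is strictly increasing.

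Second, for $\lim_{\alpha\to+\infty} C(\alpha)=1$: Corollary \ref{Coro: cota sup un alpha} and the surjectivity of $g$ onto $\Rpos$ force $u_n<g^{-1}(\alpha)\to 0$ as $\alpha\to\infty$, while Lemma \ref{Lema: cota sup un alpha u1} gives $u_n<e^{M}u_1$ with $M=g_1'(g^{-1}(\alpha))$; in particular $u_1\to 0$. Using the analyticity of $g_2$ at $x=0$ together with $g_2(0)=0$, I factor $g_2(y)=y\,\tilde g_2(y)$ with $\tilde g_2$ continuous and strictly positive in a neighbourhood of $0$, so that
$$\frac{g_2(u_n)}{g_2(u_1)}\;=\;\frac{u_n}{u_1}\cdot\frac{\tilde g_2(u_n)}{\tilde g_2(u_1)},$$
and the second factor tends to $1$ by continuity, reducing matters to showing $u_n/u_1\to 1$. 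A telescoping of Lemma \ref{Lema: positividad}(\ref{Lema: positividad - item 1}) combined with the fact that the consecutive differences $u_{k+1}-u_k$ are increasing in $k$ yields $u_n-u_1\le (n-1)(u_n-u_{n-1})\le \alpha g_2(u_n)$; combining this with $\alpha g_2(u_n)<g_1(u_n)$ (Lemma \ref{Lema: relacion g1 g2 alpha}) and with $u_1\ge e^{-M}u_n$ produces
$$\frac{u_n}{u_1}-1\;\le\;e^{M}\,\frac{g_1(u_n)}{u_n}.$$

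The main obstacle will be the last step: showing that the right-hand side of the previous inequality vanishes as $\alpha\to\infty$. The estimate $u_n\le e^{M}u_1$ alone is not sharp enough, since $M$ is merely bounded as $g^{-1}(\alpha)\to 0$; one must use the decreasing character of $g=g_1/g_2$ near the origin together with the analyticity of $g_1$ and $g_2$ at $x=0$ to control the order of vanishing of $g_1$ at $0$ and so conclude $g_1(u_n)/u_n\to 0$. In the prototype case $g_1(x)=x^p$, $g_2(x)=x^q$ with $p<q$ (where the hypothesis $g_1''>0$ forces $p>1$, hence $g_1'(0)=0$) this is clear; the general statement is where the bulk of the technical effort is concentrated.
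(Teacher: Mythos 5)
Your choice of $C(\alpha):=g_2(u_n)/g_2(u_1)$ is a genuinely different normalisation from the paper's, which inserts the intermediate bound $g_2(u_n)<g_2(e^Mu_1)$ from Lemma~\ref{Lema: cota sup un alpha u1} and then takes $C(\alpha)$ to be an upper bound for $g_2(e^Mu_1)/g_2(u_1)$ over $u_1\in(0,g^{-1}(\alpha)]$; the paper's $C(\alpha)$ is therefore manifestly independent of the particular solution (hence of $h$), which matters downstream in Theorem~\ref{Teo: Cota Nro Condicion}, whereas yours a priori depends on $n$. That said, your reduction $g_1(u_1)<\alpha\,g_2(u_n)$ and $C(\alpha)\ge1$ are both fine. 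Two problems arise in your limit argument. First, the factorisation $g_2(y)=y\,\tilde g_2(y)$ with $\tilde g_2$ positive near $0$ is only valid when $g_2'(0)\neq 0$; since $g_2$ vanishes at $0$ to some order $q\ge 1$ (and in fact $q\ge2$ in the regime the paper ultimately works in, see Section~\ref{seccion: algoritmo}), the correct factorisation is $g_2(y)=y^q\tilde g_2(y)$ with $\tilde g_2(0)\neq 0$. The reduction to $u_n/u_1\to1$ survives this correction, so this is a local error rather than a dead end.

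The second problem is the genuine gap: you never prove $u_n/u_1\to1$, and the obstacle you describe is a misdiagnosis. You assert that $M=g_1'(g^{-1}(\alpha))$ is ``merely bounded'' as $\alpha\to\infty$, so that $u_n<e^Mu_1$ is not sharp enough, and you detour into a telescoping estimate that reduces matters to $g_1(u_n)/u_n\to0$, which you leave unproved. But the paper's own proof relies on the very fact you discard: $g_1$ vanishes at $0$ to order $p\ge2$ (this is what makes $\lim_{x\to0^+}g_2(e^Mx)/g_2(x)=e^{qM}\to1$ work in the paper's argument, and it is made explicit in Section~\ref{seccion: algoritmo}), hence $g_1'(0)=0$ and $M=g_1'(g^{-1}(\alpha))\to0$ since $g^{-1}(\alpha)\to0$. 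Once you grant this, your own inequalities $1<u_n/u_1<e^M$ (from Lemma~\ref{Lema: positividad} and Lemma~\ref{Lema: cota sup un alpha u1}) immediately give $u_n/u_1\to1$ and, with the corrected factorisation, $\tilde g_2(u_n)/\tilde g_2(u_1)\to1$; the telescoping step and the unproved estimate $g_1(u_n)/u_n\to0$ are unnecessary. So the proof as written is incomplete, but the gap is closable by invoking $g_1'(0)=0$ rather than by pursuing the harder bound you sketch.
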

\begin{proof}
From Lemma \ref{Lema: cota sup un alpha u1} and Lemma \ref{Lema: positividad}(\ref{Lema: positividad - item 1}) we deduce the inequalities
\begin{itemize}
\item $g_2(u_n) < g_2(e^{M} u_1)$,
\item $g_1(u_1)< h\Big(\frac{1}{2}g_1(u_1)+ g_1(u_2) + \cdots + g_1(u_{n-1}) +
\frac{1}{2}g_1(u_n)\Big)=\alpha g_2(u_n)$.
\end{itemize}
with $M:=g_1'\big(g^{-1}(\alpha)\big)$. Combining both inequalities we obtain
\begin{equation}\label{ec: g1(u1) menor alpha g2(cte u1)}
g_1(u_1)< \alpha g_2(e^{M} u_1)= \alpha \frac{g_2(e^{M} u_1)}{g_2(u_1)} g_2(u_1).
\end{equation}
Since $g_2$ is an analytic function in $x=0$ and $g_2(x) \neq 0$ for every $x > 0$, exists $k \ge 1$ such that
$$
\lim_{x \rightarrow 0^+} \frac{g_2(e^{M} x)}{g_2(x)}= e^{kM}.
$$
Combining this with Corollary \ref{Coro: cota sup un alpha}, we deduce that there exists $C(\alpha)> 0$ such that
$$1 \le \frac{g_2(e^{M} u_1)}{g_2(u_1)} \le C(\alpha).$$
Furthermore, we can choose $C(\alpha)$ with
$$
\lim_{{\alpha}\rightarrow +\infty} C(\alpha) = 1.
$$
Combining this remark with (\ref{ec: g1(u1) menor alpha g2(cte u1)}) we obtain
$$
g_1(u_1)< \alpha C(\alpha) g_2(u_1),
$$
which immediately implies the statement of the lemma.
\end{proof}

%
%

%
%
%
\section{Numerical conditioning}\label{seccion: condicionamiento}
Let be given $n\in\N$ and $\alpha^*>0$. In order to compute the positive solution of (\ref{ec: sistema_A}) for this value of $n$ and $A=\alpha^*$, we shall consider (\ref{ec: sistema_A}) as a family of systems parametrized by the values $\alpha$ of $A$, following the positive real path determined by (\ref{ec: sistema_A}) when $A$ runs through a suitable interval whose endpoints are $\alpha_*$ and $\alpha^*$, where $\alpha_*$ is a positive constant independent of $h$ to be fixed in Section \ref{seccion: algoritmo}.

A critical measure for the complexity of this procedure is the condition number of the path considered, which is essentially determined by the inverse of the Jacobian matrix of (\ref{ec: sistema_A}) with respect to the variables $U_1 \klk U_n$, and the gradient vector of (\ref{ec: sistema_A}) with respect to the variable $A$ on the path. In this section we prove the invertibility of such a Jacobian matrix, and obtain an explicit form of its inverse. Then we obtain an upper bound on the condition number of the path under consideration.
%
\subsection{The Jacobian matrix}
Let $F:=F(A,U):\R^{n+1}\to\R^n$ be the nonlinear map defined by
the right-hand side of (\ref{ec: sistema_A}). In this section we
analyze the invertibility of the Jacobian matrix of $F$ with
respect to the variables $U$, namely,
$$
J(A,U):=\frac{\partial F}{\partial{U}}(A,U) :=\left(
\begin{array}{cccc}
  \Gamma_1 & -1       \\
   -1   & \ddots & \ddots\\
   &    \ddots & \ddots & -1\\
   & & -1 &   \Gamma_n
\end{array}\right),
$$
with $\Gamma_1:=1+\frac{1}{2} h^2 g_1'(U_1)$, $\Gamma_i:=2
+ h^2g_1'(U_i)$ for $2 \le i \le n-1$ and $\Gamma_n:=
1+\frac{1}{2}h^2g_1'(U_n)-hAg_2'(U_n)$.

We start relating the nonsingularity of the Jacobian matrix $J(\alpha,u)$ with that of the corresponding point in the
path determined by (\ref{ec: sistema_A}). Let $(\alpha,u) \in (\Rpos)^{n+1}$ be a solution of (\ref{ec: sistema_A}) for $A=\alpha$. Taking derivatives with respect to $U_1$ in (\ref{ec: relaciones recursivas}) and substituting $u_1$ for $U_1$ we obtain the following tridiagonal system:
$$\left(\begin{array}{ccccc}
\Gamma_1(u_1) & -1 &\\
-1 & \ddots& \ddots \\
&\ddots &\ddots & -1\\
&  & -1 & \Gamma_n(u_1)
\end{array}\right)
\left(\begin{array}{c}
1\\
U_2'(u_1)\\
\vdots\\
U_n'(u_1)
\end{array}\right)=
\left(\begin{array}{c}
0\\
\vdots\\
0\\
h g_2\big(U_n(u_1)\big) A'(u_1)
\end{array}\right).
$$
For $1\le k\le n-1$, we denote by $\Delta_k:=\Delta_k(A,U)$ the $k$th principal minor of the matrix $J(A,U)$, that is, the $(k \times k)$-matrix formed by the first $k$ rows and the first $k$ columns of $J(A,U)$. By the Cramer rule we deduce the identities:
\begin{eqnarray}
h g_2\big(U_n(u_1)\big) A'(u_1) &=& \det\big(J(\alpha,u)\big), \label{ec: A'=det 1}\\
\det\big(J(\alpha,u)\big) U_k'(u_1) &=& h g_2\big(U_n(u_1)\big) A'(u_1) \det\big(\Delta_{k-1}(\alpha,u)\big), \label{ec: A'=det 2}
\end{eqnarray}
for $2\le k\le n$. Suppose that $\alpha>0$ and that the function $g$ of (\ref{ec: definicion g}) is decreasing. Then Theorem \ref{Teo: smoothness homotopy path} asserts that $A'(u_1)<0$ holds. Combining this inequality with (\ref{ec: A'=det 1}) we conclude that $\det\big(J(\alpha,u)\big)<0$ holds. Furthermore, by (\ref{ec: A'=det 2}), we have
\begin{equation} \label{ec: U'=det}
U_k'(u_1) = \det \big(\Delta_{k-1}(\alpha , u)\big) \quad (2 \le k \le n).
\end{equation}
Combining Remark \ref{Lema: positividad}(\ref{Lema: positividad - item 4}) and (\ref{ec: U'=det}) it follows that
$\det\big(\Delta_k(\alpha, u)\big)>0$ holds for $1 \le k \le n-1$. As a consequence, we have that all the principal minors of the symmetric matrix $\Delta_{n-1}(\alpha, u)$ are positive. Then the Sylvester criterion shows that $\Delta_{n-1}(\alpha, u)$ is positive definite. These remarks allow us to prove the following result.
\begin{theorem}\label{Teo: J inversible}
Let $(\alpha,v)\in(\R_{>0})^{n+1}$ be a solution of (\ref{ec: sistema_A}) for $A=\alpha$. If the function $g$ of (\ref{ec: definicion g}) is decreasing, then the matrix $J(\alpha, u)$ is invertible with $\det\big(J(\alpha, u)\big)<0$. Furthermore, their $(n-1)$th principal minor is symmetric and positive definite.
\end{theorem}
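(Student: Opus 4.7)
The plan is to package the chain of equalities already derived just above the theorem statement into a clean argument. First, I would observe that by applying Cramer's rule to the tridiagonal linear system obtained by differentiating the recursive relations \eqref{ec: relaciones recursivas} at $u_1$, one obtains the identity
\[
h\, g_2\bigl(U_n(u_1)\bigr)\, A'(u_1)=\det\bigl(J(\alpha,u)\bigr).
\]
Since $u \in (\R_{>0})^{n+1}$ and hence $U_n(u_1)=u_n>0$, the hypotheses on $g_2$ imply $g_2(u_n)>0$. On the other hand, Theorem~\ref{Teo: smoothness homotopy path} applied to the decreasing function $g$ yields $A'(u_1)<0$. Combining these two facts with the identity above gives $\det\bigl(J(\alpha,u)\bigr)<0$, which in particular establishes the invertibility assertion.

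For the second part, I would use the companion Cramer identity
\[
\det\bigl(J(\alpha,u)\bigr)\,U_k'(u_1)=h\,g_2\bigl(U_n(u_1)\bigr)\,A'(u_1)\,\det\bigl(\Delta_{k-1}(\alpha,u)\bigr),
\]
valid for $2\le k\le n$, which together with the previous identity reduces to
\[
U_k'(u_1)=\det\bigl(\Delta_{k-1}(\alpha,u)\bigr).
\]
Then I would invoke Lemma~\ref{Lema: positividad}(\ref{Lema: positividad - item 4}), which gives $U_k'(u_1)>1>0$ for $2\le k\le n$, to conclude $\det\bigl(\Delta_k(\alpha,u)\bigr)>0$ for $1\le k\le n-1$. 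Thus every leading principal minor of the $(n-1)\times(n-1)$ matrix $\Delta_{n-1}(\alpha,u)$ is strictly positive.

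It remains to note that $\Delta_{n-1}(\alpha,u)$ is symmetric: by the explicit form of $J(\alpha,U)$ displayed before the theorem, the first $n-1$ rows and columns form a tridiagonal matrix whose off-diagonal entries are all equal to $-1$, so it agrees with its transpose. Symmetry combined with positivity of all leading principal minors allows the application of Sylvester's criterion, which delivers the positive-definiteness of $\Delta_{n-1}(\alpha,u)$, completing the theorem. There is no real obstacle here; the whole argument is essentially a bookkeeping consolidation of the formulas \eqref{ec: A'=det 1}, \eqref{ec: A'=det 2} and \eqref{ec: U'=det} together with Theorem~\ref{Teo: smoothness homotopy path} and Lemma~\ref{Lema: positividad}.
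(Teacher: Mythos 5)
Your proposal matches the paper's own argument step for step: the Cramer identities \eqref{ec: A'=det 1} and \eqref{ec: A'=det 2}, Theorem~\ref{Teo: smoothness homotopy path} giving $A'(u_1)<0$ and hence $\det\bigl(J(\alpha,u)\bigr)<0$, the reduction to $U_k'(u_1)=\det\bigl(\Delta_{k-1}(\alpha,u)\bigr)$, Lemma~\ref{Lema: positividad}(\ref{Lema: positividad - item 4}), and Sylvester's criterion. There is no gap and no genuinely different route; you merely made explicit the positivity of $g_2(u_n)$ and the symmetry of $\Delta_{n-1}(\alpha,u)$, both of which the paper leaves implicit.
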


Having shown the invertibility of the matrix $J(\alpha,u)$ for every solution $(\alpha,u) \in (\Rpos)^{n+1}$ of (\ref{ec: sistema_A}), the next step is to obtain an explicit expression for the corresponding inverse matrices $J^{-1}(\alpha,u)$. For this purpose, we establish a result on the structure of the matrix $J^{-1}(\alpha,u)$.
\begin{proposition}\label{Prop: factoriz J^{-1}}
Let $(\alpha,u) \in(\R_{>0})^{n+1}$ be a solution of (\ref{ec: sistema_A}). If the function $g$ of (\ref{ec: definicion g}) is decreasing, then the following matrix factorization holds:
$$
J^{-1}(\alpha,u) = \!\!
 \left(\begin{array}{ccccccc}
 1& \frac{1}{u_2'} & \frac{1}{u_3'} &\dots & \frac{1}{u_n'} \\[1ex]
  & 1 & \frac{u_2'}{u_3'}  & \dots &  \frac{u_2'}{u_n'} \\[1ex]
  &  & \ddots & \ddots & \vdots \\[1ex]
  &  & & 1 & \frac{u_{n-1}'}{u_n'} \\[1ex]
  &   &  &  & 1
 \end{array}
 \right)\!\!
 \left(\begin{array}{ccccccc}
  \frac{1}{u_2'} & \\[1ex]
  \frac{1}{u_3'} & \frac{u_2'}{u_3'} &  \\[1ex]
  \vdots &  \vdots & \ddots &  \\[1ex]
  \frac{1}{u_n'} & \frac{u_2'}{u_n'} & \dots & \frac{u_{n-1}'}{u_n'} &  \\[1ex]
  \frac{1}{d(J)} & \frac{u_2'}{d(J)}  & \dots & \frac{u_{n-1}'}{d(J)} & \frac{u_n'}{d(J)}
 \end{array}
 \right),
 $$
where $d(J) := \det\big(J(\alpha,u)\big)$ and $u_k':=U_k'(u_1)$ for $2 \le k \le n$.
\end{proposition}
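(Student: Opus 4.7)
The plan is to verify the factorization directly by showing $J\cdot(L\cdot R)=I_n$, exploiting the tridiagonal structure of $J$ and three linear recurrences satisfied by the quantities $u_k':=U_k'(u_1)$. Differentiating the first two lines of (\ref{ec: relaciones recursivas}) with respect to $u_1$ yields $u_1'=1$ and $u_2'=\Gamma_1$. Differentiating the middle recurrences of (\ref{ec: relaciones recursivas}) gives the three-term recurrence $\Gamma_i u_i' = u_{i+1}' + u_{i-1}'$ for $2\le i\le n-1$. Finally, applying the chain rule to the last equation of (\ref{ec: sistema_A}), viewed as $F_n(A(u_1),U_1,\ldots,U_n(u_1))=0$, and substituting the identity $hg_2(u_n)A'(u_1)=d(J)$ from (\ref{ec: A'=det 1}), gives the boundary relation $\Gamma_n u_n' - u_{n-1}' = d(J)$. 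Positivity of every $u_k'$ is guaranteed by Lemma \ref{Lema: positividad}(\ref{Lema: positividad - item 4}) and $d(J)\neq 0$ by Theorem \ref{Teo: J inversible}, so the entries of $L$ and $R$ in the statement are well defined.

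The first computation is $JL$. Because $J$ is tridiagonal and $L_{ij}=u_i'/u_j'$ for $i\le j$ (zero otherwise), the entry $(JL)_{ij}$ for $1<i<n$ and $j\ge i+1$ collapses to $(-u_{i-1}'+\Gamma_i u_i' -u_{i+1}')/u_j'$, which vanishes by the three-term recurrence. The remaining cases $j=i$ and $j=i-1$ reduce in the same way to $u_{i+1}'/u_i'$ and $-1$ respectively, while the rows $i=1$ and $i=n$ are handled by the boundary recurrences $u_2'=\Gamma_1$ and $\Gamma_n u_n'-u_{n-1}'=d(J)$. The outcome is that $JL$ is the lower bidiagonal matrix with subdiagonal entries all equal to $-1$ and diagonal $u_2'/u_1',\,u_3'/u_2',\ldots,u_n'/u_{n-1}',\,d(J)/u_n'$.

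The second computation is $(JL)\cdot R$. Because $JL$ is lower bidiagonal, only two terms contribute to each entry: $((JL)R)_{ij}=(u_{i+1}'/u_i')\,R_{i,j}-R_{i-1,j}$ in the bulk rows, with the obvious modifications at $i=1$ (no subdiagonal term) and $i=n$ (where $d(J)/u_n'$ replaces $u_{n+1}'/u_n'$ and the last row of $R$ uses the entries $u_j'/d(J)$). Plugging in the explicit form of $R$ from the statement, a routine case analysis on whether $j<i$, $j=i$, or $j>i$ produces $\delta_{ij}$ in every case. Since $J$ is invertible by Theorem \ref{Teo: J inversible}, this shows $J^{-1}=LR$. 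The main obstacle is the boundary identity $\Gamma_n u_n' - u_{n-1}' = d(J)$: it is precisely the absence of a ``natural'' $u_{n+1}'$ in the recurrence, compensated by the implicit-function derivative $A'(u_1)$, that forces the last row of $R$ to carry the denominator $d(J)$ rather than continuing the pattern $u_j'/u_{i+1}'$.
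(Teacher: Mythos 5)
Your argument is correct, and it takes a genuinely different route from the paper. The paper's proof is a one-line citation: it observes that $J(\alpha,u)$ is symmetric, tridiagonal, invertible, with positive definite $(n-1)$th principal minor, and then defers to Proposition~25 of \cite{DrMa09}. You instead give a direct, self-contained verification that $J\cdot(LR)=I_n$, driven by three identities obtained by differentiating (\ref{ec: relaciones recursivas}): the initial condition $u_2'=\Gamma_1$, the interior three-term recurrence $\Gamma_i u_i'=u_{i+1}'+u_{i-1}'$ for $2\le i\le n-1$, and the boundary identity $\Gamma_n u_n'-u_{n-1}'=d(J)$, which is exactly the last row of the tridiagonal system displayed in the paper combined with (\ref{ec: A'=det 1}). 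All three hold, and the two-stage computation ($JL$ lower bidiagonal, then $(JL)R=I$) checks out entry by entry, including the edge rows $i=1$ and $i=n$ and the observation that $u_1'=1$ fills the first slot of each pattern. Your route has the advantage of being fully explicit and not requiring the reader to consult the earlier reference or invoke positive definiteness; the paper's route is shorter and highlights that this is an instance of a general structural factorization for symmetric tridiagonal matrices. One minor presentational suggestion: you invoke the chain rule on the last equation of (\ref{ec: sistema_A}) to get the boundary identity, but it is cleaner simply to read off the $n$th row of the displayed tridiagonal system in Section~\ref{seccion: condicionamiento} together with (\ref{ec: A'=det 1}), as those relations are already established in the text.
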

\begin{proof}
Since $J(\alpha,u)$ is symmetric, invertible, tridiagonal and their $(n-1)$th principal minor is positive definite, the proof follows by a similar argument to that of \cite[Proposition 25]{DrMa09}.
\end{proof}
%
%
\subsection{Upper bounds on the condition number}
From the explicit expression of the inverse of the Jacobian matrix $J(A,U)$ on the points of the real path
determined by (\ref{ec: sistema_A}), we can finally obtain estimates on the condition number of such a path.

Let $\alpha^*> 0$ and $\alpha_*>0$ be given constants independent of $h$. Suppose that the function $g$ of (\ref{ec: definicion g}) is surjective and decreasing. Then Theorem \ref{Teo: uniqueness} proves that (\ref{ec: sistema_A}) has a unique positive solution with $A=\alpha$ for every $\alpha$ in the real interval $\mathcal{I} := \mathcal{I}(\alpha_*, \alpha^*)$ whose endpoints are $\alpha_*$ and $\alpha^*$, which we denote by
$\big(u_1(\alpha), U_2\big(u_1(\alpha)\big) \klk U_n\big(u_1(\alpha)\big)\big)$. We bound the condition number
$$
\kappa := \max\{\|\varphi'(\alpha)\|_\infty : \alpha \in \mathcal{I}\},
$$
associated to the function $\varphi: \mathcal{I} \to \R^n$, $\varphi(\alpha):=\big(u_1(\alpha), U_2\big(u_1(\alpha)\big) \klk$ $U_n\big(u_1(\alpha)\big)\big)$.

For this purpose, from the Implicit Function Theorem we have
\begin{eqnarray*}
\|\varphi'(\alpha)\|_\infty &=& \Big\|\Big(\frac{\partial F}{\partial U} \big(\alpha,\varphi(\alpha)\big)\Big)^{-1} \frac{\partial F}{\partial A}\big(\alpha,\varphi(\alpha)\big)\Big\|_\infty\\
&=& \Big\|J^{-1}\big(\alpha,\varphi(\alpha)\big) \frac{\partial F}{\partial A}\big(\alpha,\varphi(\alpha)\big)\Big\|_\infty.
\end{eqnarray*}
We observe that $(\partial F/\partial A)(\alpha, \varphi(\alpha))\!=\!\Big(0,\dots ,0, -h g_2\big(U_n\big(u_1(\alpha)\big)\big)\Big)^t$ holds. From Proposition \ref{Prop: factoriz J^{-1}} we obtain
$$
\|\varphi'(\alpha)\|_\infty = \Big\| \frac{h g_2\big(U_n\big(u_1(\alpha)\big)\big)} {\det\big(J\big(\alpha,\varphi(\alpha)\big)\big)} \Big(1,{U_2'\big(u_1(\alpha)\big)}, \dots ,  U_n'\big(u_1(\alpha)\big) \Big)^t\Big\|_\infty.
$$
Combining this identity with (\ref{ec: A'=det 1}), we conclude that
$$
\|\varphi'(\alpha)\|_\infty = \Big\| \frac{1} {A'\big(u_1(\alpha)\big)} \Big(1,{U_2'\big(u_1(\alpha)\big)}, \dots ,  U_n'\big(u_1(\alpha)\big) \Big)^t\Big\|_\infty.
$$
From Lemma \ref{Lema: positividad}, we deduce the following proposition.
\begin{proposition}\label{Prop: Nro condicion}
Let $\alpha^*> 0$ and $\alpha_*>0$ be given constants independent of $h$. Suppose that the function $g$ of (\ref{ec: definicion g}) is surjective and decreasing. Then
$$
\|\varphi'(\alpha)\|_\infty
=\frac{U_n'\big(u_1(\alpha)\big)}{\big|A'\big(u_1(\alpha)\big)\big|}
$$
holds for $\alpha \in \mathcal{I}$.
\end{proposition}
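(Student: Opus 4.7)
The plan is to start from the identity displayed immediately before the proposition statement, namely
\begin{equation*}
\|\varphi'(\alpha)\|_\infty = \Big\| \frac{1}{A'\big(u_1(\alpha)\big)} \big(1,U_2'\big(u_1(\alpha)\big)\klk U_n'\big(u_1(\alpha)\big)\big)^t\Big\|_\infty,
\end{equation*}
which has already been derived above from the Implicit Function Theorem, the explicit factorization of $J^{-1}$ supplied by Proposition \ref{Prop: factoriz J^{-1}}, and the identity (\ref{ec: A'=det 1}). From here the task is purely to identify which coordinate achieves the maximum.

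First I would use absolute homogeneity of the supremum norm to pull the scalar $1/A'\big(u_1(\alpha)\big)$ outside, obtaining
\begin{equation*}
\|\varphi'(\alpha)\|_\infty \;=\; \frac{1}{\big|A'\big(u_1(\alpha)\big)\big|}\, \max\bigl\{\,1,\,\big|U_2'\big(u_1(\alpha)\big)\big|\klk \big|U_n'\big(u_1(\alpha)\big)\big|\bigr\}.
\end{equation*}
The modulus in the denominator is the correct object to work with because Theorem \ref{Teo: smoothness homotopy path} guarantees $A'\big(u_1(\alpha)\big)<0$ on $\mathcal{I}$ (the hypothesis that $g$ is decreasing is in force), so $1/A'$ is a negative scalar.

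To identify the maximum inside the braces I would invoke Lemma \ref{Lema: positividad}. Part (\ref{Lema: positividad - item 4}) gives $U_k'\big(u_1(\alpha)\big) > 1$ for every $2 \le k \le n$, so every entry of the tuple is positive and the absolute values can be dropped. Part (\ref{Lema: positividad - item 3}) gives $U_k'\big(u_1(\alpha)\big) - U_{k-1}'\big(u_1(\alpha)\big) > 0$ for $2 \le k \le n$, so the sequence $1=U_1'\big(u_1(\alpha)\big)<U_2'\big(u_1(\alpha)\big)<\cdots<U_n'\big(u_1(\alpha)\big)$ is strictly increasing. Consequently the maximum is attained at the last index and equals $U_n'\big(u_1(\alpha)\big)$, yielding the stated formula.

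There is no substantive obstacle here: the argument is essentially a bookkeeping step that repackages the infinity norm of the path derivative by observing that the relevant vector has positive, strictly increasing components. All the nontrivial inputs, namely the explicit shape of $J^{-1}$, the sign of $A'$, and the monotonicity of the $U_k'$, are already supplied by earlier results in the paper.
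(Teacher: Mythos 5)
Your proof is correct and follows the same route as the paper: both start from the identity obtained via the Implicit Function Theorem, Proposition~\ref{Prop: factoriz J^{-1}}, and equation~(\ref{ec: A'=det 1}), then appeal to Lemma~\ref{Lema: positividad} to pick out $U_n'\big(u_1(\alpha)\big)$ as the largest coordinate. You have simply made explicit the monotonicity bookkeeping (items~(\ref{Lema: positividad - item 3}) and~(\ref{Lema: positividad - item 4}) giving $1<U_2'<\cdots<U_n'$, together with the negativity of $A'$ from Theorem~\ref{Teo: smoothness homotopy path}) that the paper compresses into the single sentence preceding the proposition.
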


\noindent Combining Proposition \ref{Prop: Nro condicion} and (\ref{ec: Cota Aprima}) we conclude that
$$
\begin{array}{rcl}
\|\varphi'(\alpha)\|_\infty &<& \frac{g\big(U_n\big(u_1(\alpha)\big)\big)}{\alpha\big|g'\big(U_n\big(u_1(\alpha)\big)\big)\big|}.
\end{array}
$$
Applying Lemma \ref{Lema: Cota inf u1 alpha} and Corollary \ref{Coro: cota sup un alpha} we deduce the
following result.
\begin{theorem}\label{Teo: Cota Nro Condicion}
Let $\alpha^*> 0$ and $\alpha_*>0$ be given constants independent of $h$. Suppose that the function $g$ of (\ref{ec: definicion g}) is surjective and $g'(x)<0$ holds for all $x \in \Rpos$. Then there exists a constant $\kappa_1(\alpha_*,\alpha^*) > 0$ independent of $h$ with
$$
\kappa < \kappa_1(\alpha_*,\alpha^*).
$$
\end{theorem}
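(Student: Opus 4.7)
The plan is to start from the sharpened estimate
$$\|\varphi'(\alpha)\|_\infty < \frac{g\bigl(U_n(u_1(\alpha))\bigr)}{\alpha\,\bigl|g'\bigl(U_n(u_1(\alpha))\bigr)\bigr|}$$
derived immediately before the theorem statement, and to show that $U_n(u_1(\alpha))$ stays inside a compact sub-interval $K\subset\Rpos$ whose location is independent of $h$ as $\alpha$ ranges over $\mathcal{I}$. Once this is secured, the right-hand side is a continuous function of $(\alpha,x)$ on the compact set $\mathcal{I}\times K$ with strictly positive denominator (since $\alpha\ge\min\{\alpha_*,\alpha^*\}>0$ and $g'<0$ on the compact $K$), so its maximum over $\mathcal{I}\times K$ will furnish the constant $\kappa_1(\alpha_*,\alpha^*)$ the theorem demands.

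The location of $K$ is assembled from the two bounds already proved. The monotonicity of $g^{-1}$ together with Corollary \ref{Coro: cota sup un alpha} yields $U_n(u_1(\alpha))<g^{-1}(\alpha)\le g^{-1}(\min\{\alpha_*,\alpha^*\})$, which is the upper endpoint of $K$. For the lower endpoint, Lemma \ref{Lema: positividad}(\ref{Lema: positividad - item 1}) implies $U_n(u_1(\alpha))>u_1(\alpha)$, and Lemma \ref{Lema: Cota inf u1 alpha} together with the monotonicity of $g^{-1}$ then gives $U_n(u_1(\alpha))>g^{-1}\bigl(\alpha C(\alpha)\bigr)\ge g^{-1}\bigl(C^*\max\{\alpha_*,\alpha^*\}\bigr)$, where $C^*:=\sup_{\alpha\in\mathcal{I}}C(\alpha)$.

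The one step that requires a second look is the finiteness of $C^*$, since Lemma \ref{Lema: Cota inf u1 alpha} merely asserts a pointwise value of $C(\alpha)$ together with the limit $\lim_{\alpha\to+\infty}C(\alpha)=1$. I would unpack the proof of that lemma to observe that $C(\alpha)$ may be chosen as any upper bound of $g_2(e^{M(\alpha)}x)/g_2(x)$ for $x\in(0,g^{-1}(\alpha))$, with $M(\alpha):=g_1'\bigl(g^{-1}(\alpha)\bigr)$. Since $M$ depends continuously on $\alpha$ and $g_2$ is analytic at $0$ with $g_2(0)=0$ and $g_2'(0)>0$, this ratio is locally bounded uniformly in $\alpha$ as the latter varies over the compact interval $\mathcal{I}$, yielding a finite $C^*$. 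With $K$ in hand, the theorem follows by continuity and compactness, which I expect to be the easy part of the argument.
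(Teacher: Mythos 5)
Your argument is correct and follows the same route the paper takes, which is to insert the bounds of Corollary~\ref{Coro: cota sup un alpha} and Lemma~\ref{Lema: Cota inf u1 alpha} into the displayed estimate for $\|\varphi'(\alpha)\|_\infty$ preceding the theorem and then invoke continuity on a compact interval independent of $h$; the paper states this in a single sentence, and you have correctly identified and filled in the one point it glosses over, namely the uniform boundedness of $C(\alpha)$ over $\mathcal{I}$. One small inaccuracy: the paper does not assume $g_2'(0)>0$ (only $g_2'(x)>0$ for $x>0$, and $g_2$ may vanish to order $q\ge 1$ at the origin), but this does not affect your argument since the analyticity of $g_2$ at $0$ already gives the finite limit $\lim_{x\to 0^+} g_2(e^{M}x)/g_2(x)=e^{qM}$ and hence the local boundedness of the ratio you need.
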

%
%
\section{An efficient numerical algorithm} \label{seccion: algoritmo}
As a consequence of the well conditioning of the positive solutions of (\ref{ec: sistema_A}), we shall exhibit an algorithm computing the positive solution of (\ref{ec: sistema_A}) for $A=\alpha^*$. This algorithm is a homotopy continuation method (see, e.g., \cite[\S 10.4]{OrRh70}, \cite[\S 14.3]{BlCuShSm98}) having a cost which is {\em linear} in $n$.

There are two different approaches to estimate the cost of our procedure: using Kantorovich--type estimates as in \cite[\S 10.4]{OrRh70}, and using Smale--type estimates as in \cite[\S 14.3]{BlCuShSm98}. We shall use the former, since we are able to control the condition number in suitable neighborhoods of the real paths determined by (\ref{ec: sistema_A}). Furthermore, the latter does not provide significantly better estimates.

Let $\alpha_* > 0$ be a constant independent of $h$. Suppose that the following conditions hold:
\begin{itemize}
    \item $g$ is surjective,
    \item $g'(x) < 0$ holds for all $x > 0$,
    \item $g''(x) \ge 0$ holds for all $x > 0$,
\end{itemize}
where $g$ is the function of (\ref{ec: definicion g}). Then the path defined by the positive solutions of (\ref{ec: sistema_A}) with $\alpha \in [\alpha^*,\alpha_*]$ is smooth, and the estimate of Theorem \ref{Teo: Cota Nro Condicion} hold. Assume that we are given a suitable approximation $u^{(0)}$ of the positive solution $\varphi(\alpha_*)$ of (\ref{ec: sistema_A}) for $A=\alpha_*$. In this section we exhibit an algorithm which, on input $u^{(0)}$, computes an approximation of $\varphi(\alpha^*)$. We recall that $\varphi$ denotes the function which maps each $\alpha > 0$ to the positive solution of (\ref{ec: sistema_A}) for $A=\alpha$.

From Corollary \ref{Coro: cota sup un alpha} and Lemma \ref{Lema: Cota inf u1 alpha}, we have that the coordinates of the positive solution of (\ref{ec: sistema_A}) tend to zero when $\alpha$ tends to infinity. Therefore, for $\alpha$ large enough, we obtain a suitable approximation of the positive solution (\ref{ec: sistema_A}) for $A=\alpha_*$, and we track the positive real path determined by (\ref{ec: sistema_A}) until $A=\alpha^*$.

In order to deal with a bounded set, we consider the change of variables $B:=1/A$. Then system (\ref{ec: sistema_A}) for $A=\alpha^*$ can be rewritten in terms of $B$ as follows:
\begin{equation}\label{ec: sistema_B}
\left\{\begin{array}{rcll}
 0 & = & -\big(U_2-U_1\big) + \frac{h^2}{2} g_1(U_1), &\\
 {}^{}\\
 0 & = & -\big(U_{k+1}-2U_k + U_{k-1}\big) + h^2 g_1(U_k), & (2\le k\le n-1),\\
 {}^{} \\
 0 & = & -\big(U_{n-1}-U_{n}\big) - {h} B^{-1} g_2(U_n) + \frac{h^2}{2}g_1(U_n)
 \end{array}\right.
 \end{equation}
for $B=\beta^*$, where $\beta^*:=1/\alpha^*$.

Let $0 < \beta_* < \beta^*$ be a constant independent of $h$ to be determined. Fix $\beta \in [\beta_*, \beta^*]$. By Corollary \ref{Coro: cota sup un alpha} it follows that $\varphi(\alpha)$ is an interior point of the compact set
$$K_{\beta}:=\{u \in \R^n : \|u\|_{\infty} \le 2g^{-1}(1/\beta) \},$$
where $\varphi : [\beta_*,\beta^*] \to \R^n$ is the function which maps each $\beta \in [\beta_*,\beta^*]$ to the positive solution of (\ref{ec: sistema_B}) for $B=\beta$, namely
$$
\varphi(\beta):= \big(u_1(\beta), \dots , u_n(\beta)\big) := \big(u_1(\beta), U_2\big(u_1(\beta)\big),\dots, U_n\big(u_1(\beta)\big)\big).
$$

First we prove that the Jacobian matrix $J_\beta(u):=({\partial F}/{\partial U})(\beta,u)$ is invertible in a suitable subset of $K_{\beta}$. Let $u \in \R^n$ and $v \in \R^n$ be points with
$$
\|u-\varphi(\beta)\|_\infty < \delta_{\beta}, \
\|v-\varphi(\beta)\|_\infty < \delta_{\beta},
$$
where $\delta_{\beta} > 0$ is a constant to be determined. Note that if $\delta_\beta \le g^{-1}(1/\beta)$ then $u \in
K_\beta$ and $v \in K_\beta$. By the Mean Value Theorem, we see that the entries of the diagonal matrix $J_\beta(u)-
J_\beta(v)$ satisfy the estimates
$$
\begin{array}{lcll}
\Big|\big(J_\beta(u) - J_\beta(v)\big)_{ii} \Big| & \le & 2h^2 g_1''\big(2g^{-1}(1/\beta)\big) \delta_{\beta} \quad \quad  (1 \le i \le n-1),\\[2ex]
\Big|\big(J_\beta(u) - J_\beta(v)\big)_{nn}\Big| & \le & 2h
\max\{g_2''\big(2g^{-1}(1/\beta)\big)/ \beta,
g_1''\big(2g^{-1}(1/\beta)\big)\} \delta_{\beta}.
\end{array}
$$
By Theorem \ref{Teo: J inversible} and Proposition \ref{Prop: factoriz J^{-1}} we have that the matrix $J_{\varphi(\beta)} := J_\beta(\varphi(\beta)) =({\partial F}/{\partial U})(\beta ,\varphi(\beta))$ is invertible and
$$\big(J_{\varphi(\beta)}^{-1}\big)_{ij} =  \sum_{k=\max\{i,j\}}^{n-1} \frac{U_i'\big(u_1(\beta)\big) U_j'\big(u_1(\beta)\big)}{U'_k\big(u_1(\beta)\big) U'_{k+1}\big(u_1(\beta)\big)}+ \frac{U_i'\big(u_1(\beta)\big) U_j'\big(u_1(\beta)\big)}{U_n'\big(u_1(\beta)\big)\det(J_{\varphi(\beta)})} $$
holds for $1\le i, j \le n$. According to Lemma \ref{Lema: positividad}, we have $U_n'\big(u_1(\beta)\big) \ge \cdots \ge U_2'\big(u_1(\beta)\big) \ge 1$. These remarks show that
\begin{equation}\label{ec: Cota1 Exist Inv}
\begin{array}{l}
\Big\|J_{\varphi(\beta)}^{-1}\big(J_\beta(u)- J_\beta(v)\big)\Big\|_{\infty} \le \\
\\
 \quad \le \eta_\beta \delta_{\beta} \Big( 2  + \displaystyle\frac{h^2 +\sum_{j=2}^{n-1} h^2U'_j\big(u_1(\beta)\big) + hU'_n\big(u_1(\beta)\big)}{|\det(J_{\varphi(\beta)})|}
 \Big)\\
\\
 \quad \le 2\eta_\beta \delta_{\beta} \Big( 1  + \displaystyle\frac{hU'_n\big(u_1(\beta)\big)}{|\det(J_{\varphi(\beta)})|}
 \Big),
\end{array}
\end{equation}
where $\eta_\beta:=2\max\{g_1''\big(2g^{-1}(1/\beta)\big), g_2''\big(2g^{-1}(1/\beta)\big)/\beta \}$. Since $B=1/A$, from Theorem \ref{Teo: smoothness homotopy path} we deduce that $B'(u_1)=-A'(u_1)/A^2(u_1) > 0 $ for all $x>0$. Combining this assertion with (\ref{ec: A'=det 1}), we obtain the following identity:
$$\frac{hU'_n\big(u_1(\beta)\big)}{|\det(J_{\varphi(\beta)})|} = \frac{U'_n\big(u_1(\beta)\big)B^2\big(u_1(\beta)\big)} {B'\big(u_1(\beta)\big) g_2\big(u_n(\beta)\big)}.$$
From (\ref{ec: Cota Aprima}), we have that
\begin{equation}\label{ec: Cota2 Exist Inv}
\frac{hU'_n\big(u_1(\beta)\big)}{|\det(J_{\varphi(\beta)})|} = \frac{U'_n\big(u_1(\beta)\big)B^2\big(u_1(\beta)\big)} {B'\big(u_1(\beta)\big) g_2\big(u_n(\beta)\big)} \le
\frac{g\big(u_n(\beta)\big)B\big(u_1(\beta)\big)} {|g'\big(u_n(\beta)\big)|g_2\big(u_n(\beta)\big)}.
\end{equation}
Combining this inequality with the definition and the monotonicity of $g$, we deduce
\begin{eqnarray*}
\frac{|g'(u_n(\beta))|g_2(u_n(\beta))}{g(u_n(\beta))} &=&
\frac{g_1(u_n(\beta)) g_2'(u_n(\beta)) - g_1'(u_n(\beta)) g_2(u_n(\beta))}{g_1(u_n(\beta))} \\
&=& g_2'(u_n(\beta)) \Big(1 - \frac { g_1'(u_n(\beta)) g_2(u_n(\beta))} {g_1(u_n(\beta)) g_2'(u_n(\beta))}\Big).
\end{eqnarray*}
Since $g_1$ and $g_2$ are analytic functions in $x=0$ and $g_1(0) = g_2(0) = 0$, there exists $r>0$ such that, for all $|x|<r$, we have that
$$
g_1(x)=\sum_{k=p}^{\infty} c_k x^k, g_2(x)= \sum_{k=q}^{\infty} d_k x^k,
$$
with $c_p \neq 0$ and $d_q \neq 0$, where $p$ and $q$ are positive integers greater than 1. Hence, the following identity holds:
$$
\lim_{x \rightarrow 0} \bigg(1 - \frac { g_1'(x) g_2(x)} {g_1(x) g_2'(x)}\bigg) = 1 - \frac{p}{q}.
$$
Taking into account that $u_n(\beta) \in (0, g^{-1}(1/\beta^*)]$ holds for all $\beta \in [\beta_*,\beta^*]$, we conclude
$$
\frac{|g'(u_n(\beta))|g_2(u_n(\beta))}{g(u_n(\beta))} \ge g_2'(u_n(\beta)) (1 - \rho^* ),
$$
where $\rho^*$ is a constant which depends only on $\beta^*$. Combining the last inequality with (\ref{ec: Cota1 Exist Inv}) and (\ref{ec: Cota2 Exist Inv}), we obtain
\begin{eqnarray*}
\Big\|J_{\varphi(\beta)}^{-1}\Big(J_\beta(u)- J_\beta(v)\Big)\Big\|_{\infty} &\le&
 2\eta_\beta \delta_{\beta} \Big( 1  + \displaystyle\frac{g\big(u_n(\beta)\big)B\big(u_1(\beta)\big)} {|g'\big(u_n(\beta)\big)|g_2\big(u_n(\beta)\big)}
\Big)\\
&\le& 2\eta_\beta \delta_{\beta} \Big( 1  + \displaystyle\frac{B\big(u_1(\beta)\big)} {g_2'(u_n(\beta)) (1 - \rho^* )}\Big).
\end{eqnarray*}
From Lemma \ref{Lema: Cota inf u1 alpha} and Corollary \ref{Coro: cota sup un alpha}, there exists a constant $C(\beta) \ge 1$ such that
\begin{equation} \label{ec: Cota3 Exist Inv}
\Big\|J_{\varphi(\beta)}^{-1}\Big(J_\beta(u)- J_\beta(v)\Big)\Big\|_{\infty} \le  2\eta_\beta \Big( 1  + \displaystyle\frac{\beta} {g_2'\big(g^{-1}(C(\beta)/\beta)\big) (1 - \rho^*)}\Big) \delta_{\beta}.
\end{equation}
Furthermore, the following condition holds:
$$
\lim_{{\beta}\rightarrow 0^+} {C}({\beta}) = 1.
$$
Finally, since $g'(x) < 0$ holds for all $x > 0$, we have
\begin{eqnarray*}
\displaystyle\frac{\beta} {g_2'\big(g^{-1}(C(\beta)/\beta)\big) (1 - \rho^*)} &=&  \displaystyle\frac{C(\beta)} {g_2'\big(g^{-1}(C(\beta)/\beta)\big) g\big(g^{-1}(C(\beta)/\beta)\big) (1 - \rho^*)}\\
&\le &  \displaystyle\frac{C(\beta)} {g_1'\big(g^{-1}(C(\beta)/\beta)\big) (1 - \rho^*)}.
\end{eqnarray*}
Combining this inequality with (\ref{ec: Cota3 Exist Inv}), we deduce that
\begin{equation} \label{ec: Cota4 Exist Inv}
\Big\|J_{\varphi(\beta)}^{-1}\Big(J_\beta(u)- J_\beta(v)\Big)\Big\|_{\infty} \le  \displaystyle\frac{ 2\eta_\beta (\theta^* + 1) C(\beta)} {g_1'\big(g^{-1}(C(\beta)/\beta)\big) (1 - \rho^*)} \delta_{\beta},
\end{equation}
with $\theta^* := (1-\rho^*) g_1'(g^{-1}(1/\beta^*))$. Hence, defining $\delta_\beta$ in the following way:
\begin{equation}\label{ec: Def delta_beta}
\delta_\beta :=\min\Big\{
\displaystyle\frac{g_1'\big(g^{-1}(C(\beta)/\beta)\big) (1 -
\rho^*)}{8\eta_\beta (\theta^* + 1) C(\beta)}, g^{-1}(1/\beta)
\Big\},
\end{equation}
we obtain
\begin{equation} \label{ec: Cota Exist Inv}
\Big\|J_{\varphi(\beta)}^{-1}\Big(J_\beta(u)- J_\beta(v)\Big)\Big\|_{\infty} \le \frac{1}{4}.
\end{equation}
In particular, for $v=\varphi(\beta)$, this bound allows us to consider $J_\beta(u)$ as a perturbation of $J_{\varphi(\beta)}$. More precisely, by a standard perturbation lemma (see, e.g., \cite[Lemma 2.3.2]{OrRh70}) we deduce that $J_\beta(u)$ is invertible for every $u \in \mathcal{B}_{\delta_{\beta}}(\varphi(\beta))\cap K_{\beta}$ and we obtain the following upper bound:
\begin{equation} \label{ec: Cota Inv(u) por J}
\Big\|J_\beta(u)^{-1} J_{\varphi(\beta)}\Big\|_{\infty} \le \frac{4}{3}.
\end{equation}

In order to describe our method, we need a sufficient condition for the convergence of the standard Newton
iteration associated to (\ref{ec: sistema_A}) for any $\beta \in [\beta_*,\beta^*]$. Arguing as in \cite[10.4.2]{OrRh70} we deduce the following remark, which in particular implies that the Newton iteration under consideration converges.
\begin{remark}\label{Remark: cond suf conv Newton}
Set $\delta:=\min\{ \delta_{\beta}: \beta \in [\beta_*,\beta^*] \}$. Fix $\beta \in [\beta_*,\beta^*]$ and consider the Newton iteration
$$
u^{(k+1)} = u^{(k)}- J_\beta(u^{(k)})^{-1} F(\beta, u^{(k)}) \quad (k \ge 0), $$
starting at $u^{(0)} \in K_{\beta}$. If $\|u^{(0)} - \varphi(\beta)\|_\infty < \delta$, then
$$
\|u^{(k)}- \varphi(\beta)\|_{\infty} < \frac{\delta}{3^{k}}
$$
holds for $k \ge 0$.
\end{remark}

Now we can describe our homotopy continuation method. Let
$\beta_0:=\beta_* < \beta_1 < \cdots < \beta_N:=\beta^*$ be a uniform
partition of the interval $[\beta_*,\beta^*]$, with $N$ to be fixed. We
define an iteration as follows:
\begin{eqnarray}
u^{(k+1)} &=& u^{(k)} - J_{\beta_k}(u^{(k)})^{-1} F(\beta_k,u^{(k)}) \quad (0\le k\le N-1), \label{ec: 1ra iteracion}\\
u^{(N+k+1)} &=& u^{(N+k)} - J_{\beta^*}(u^{(N+k)})^{-1} F(\beta^*,u^{(N+k)}) \quad (k\ge 0). \label{ec: 2da iteracion}
\end{eqnarray}

In order to see that the iteration (\ref{ec: 1ra iteracion})--(\ref{ec: 2da iteracion}) yields an approximation of the positive solution $\varphi(\beta^*)$ of (\ref{ec: sistema_B}) for $B=\beta^*$, it is necessary to obtain a condition assuring that (\ref{ec: 1ra iteracion}) yields an attraction point for the Newton iteration (\ref{ec: 2da iteracion}). This relies on a suitable choice for $N$, which we now discuss.

By Theorem \ref{Teo: Cota Nro Condicion}, we have
\begin{eqnarray*}
 \|\varphi(\beta_{i+1})-\varphi(\beta_i)\|_{\infty} &\le& \max\{\|\varphi'(\beta)\|_\infty : \beta \in [\beta_*,\beta^*]\}\, |\beta_{i+1}-\beta_i| \\
 &\le& \kappa_1 \frac{\beta^*}{N},
\end{eqnarray*}
for $0 \le i \le N-1$, where $\kappa_1$ is an upper bound of the condition number independent of $h$. Thus, for $N :=\lceil 3 \beta^* \kappa_1 / \delta \rceil + 1 = O(1) $, by the previous estimate we obtain the following inequality:
\begin{equation}\label{ec: cota beta(i+1)-beta(i)}
\|\varphi(\beta_{i+1})-\varphi(\beta_i)\|_{\infty} < \frac{\delta}{3}
\end{equation}
for $0 \le i \le N-1$. Our next result shows that this implies the desired result.
\begin{lemma}\label{Lema: conv 1ra iteracion}
Set $N :=\lceil 3 \beta^* \kappa_1 / \delta \rceil + 1$. Then, for every $u^{(0)}$ with $\|u^{(0)} - \varphi(\beta_*)\|_\infty < \delta$, the point $u^{(N)}$ defined in (\ref{ec: 1ra iteracion}) is an attraction point for the Newton iteration (\ref{ec: 2da iteracion}).
\end{lemma}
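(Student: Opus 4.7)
The plan is to prove by induction on $k \in \{0, 1, \ldots, N\}$ that the iterates produced by (\ref{ec: 1ra iteracion}) satisfy
\[
\|u^{(k)} - \varphi(\beta_k)\|_{\infty} < \delta.
\]
Once this is established for $k = N$, since $\beta_N = \beta^*$ and $\delta \le \delta_{\beta^*}$, Remark \ref{Remark: cond suf conv Newton} applied with $\beta = \beta^*$ and starting point $u^{(N)}$ immediately shows that $u^{(N)}$ is an attraction point for the Newton iteration (\ref{ec: 2da iteracion}), which is the desired conclusion.

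For the base case $k = 0$, the bound is exactly the hypothesis $\|u^{(0)} - \varphi(\beta_*)\|_{\infty} < \delta$. For the inductive step, assume $\|u^{(k)} - \varphi(\beta_k)\|_{\infty} < \delta$ for some $k \le N-1$. I would first verify that $u^{(k)}$ lies in the admissible set $K_{\beta_k}$: since $\delta \le \delta_{\beta_k} \le g^{-1}(1/\beta_k)$ and $\|\varphi(\beta_k)\|_{\infty} \le g^{-1}(1/\beta_k)$ by Corollary \ref{Coro: cota sup un alpha}, the triangle inequality gives $\|u^{(k)}\|_{\infty} < 2 g^{-1}(1/\beta_k)$, so $u^{(k)} \in K_{\beta_k}$. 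Hence Remark \ref{Remark: cond suf conv Newton} (applied to $\beta = \beta_k$ with starting point $u^{(k)}$) yields
\[
\|u^{(k+1)} - \varphi(\beta_k)\|_{\infty} < \frac{\delta}{3}.
\]

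I then combine this Newton-contraction estimate with the path-drift estimate (\ref{ec: cota beta(i+1)-beta(i)}) via the triangle inequality:
\[
\|u^{(k+1)} - \varphi(\beta_{k+1})\|_{\infty} \le \|u^{(k+1)} - \varphi(\beta_k)\|_{\infty} + \|\varphi(\beta_k) - \varphi(\beta_{k+1})\|_{\infty} < \frac{\delta}{3} + \frac{\delta}{3} < \delta.
\]
This closes the induction.

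The plan is essentially routine once one has Remark \ref{Remark: cond suf conv Newton} (a one-step Newton contraction with factor $1/3$) and the condition number bound of Theorem \ref{Teo: Cota Nro Condicion} (which produces the drift estimate (\ref{ec: cota beta(i+1)-beta(i)}) under the choice $N = \lceil 3\beta^*\kappa_1/\delta \rceil + 1$). The only delicate point is the bookkeeping at each predictor step: the Newton contraction brings the iterate within $\delta/3$ of $\varphi(\beta_k)$, and the step to $\beta_{k+1}$ costs at most $\delta/3$ in supremum norm, so the accumulated error after re-centering remains bounded by $2\delta/3 < \delta$. No error accumulation occurs because each Newton step fully absorbs the perturbation introduced by the previous predictor, which is the reason the uniform partition size $N = O(1)$ (independent of $n$) suffices — this is where the $h$-independence of $\kappa_1$ and $\delta$ is essential.
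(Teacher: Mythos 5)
Your proposal is correct and follows essentially the same route as the paper: induction on $k$ using the one-step Newton contraction of Remark \ref{Remark: cond suf conv Newton} together with the drift estimate (\ref{ec: cota beta(i+1)-beta(i)}) and the triangle inequality. Your extra check that $u^{(k)} \in K_{\beta_k}$ before invoking the remark is a small point of added care that the paper leaves implicit.
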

\begin{proof}
By hypothesis, we have $\|u^{(0)}-\varphi(\beta_*)\|_\infty < \delta$. Arguing inductively, suppose that
$\|u^{(k)}-\varphi(\beta_k)\|_\infty < \delta$ holds for a given $0\le k < N$. By Remark \ref{Remark: cond suf conv Newton} we have that $u^{(k)}$ is an attraction point for the Newton iteration associated to (\ref{ec: sistema_B}) for $B=\beta_k$. Furthermore, Remark \ref{Remark: cond suf conv Newton} also shows that $\|u^{(k+1)} - \varphi(\beta_k)\|_\infty < \delta/3$ holds. Then
\begin{eqnarray*}
\|u^{(k+1)}-\varphi(\beta_{k+1})\|_{\infty} &\le& \|u^{(k+1)}-\varphi(\beta_{k})\|_{\infty} + \|\varphi(\beta_{k})-\varphi(\beta_{k+1})\|_{\infty} \\
&<& \mbox{$\frac {1}{3}$}\delta + \mbox{$\frac {1}{3}$}\delta  < \delta,
\end{eqnarray*}
where the inequality $\|\varphi(\beta_{k+1})-\varphi(\beta_{k})\|_{\infty} < \delta/3$ follows by (\ref{ec: cota beta(i+1)-beta(i)}). This completes the inductive argument and shows in particular that $u^{(N)}$ is an
attraction point for the Newton iteration (\ref{ec: 2da iteracion}).
\end{proof}

Next we consider the convergence of (\ref{ec: 2da iteracion}), starting with a point $u^{(N)}$ satisfying the condition $\|u^{(N)}-\varphi(\beta^*)\|_\infty < \delta \le \delta_{\beta^*}$. Combining this inequality with (\ref{ec: Def delta_beta}) we deduce that $u^{(N)} \in K_{\alpha^*}$. Furthermore, we see that
{\small \begin{equation}\label{ec: conv 2da iteracion}
\begin{array}{l}
\|u^{(N+1)} \!-\! \varphi(\beta^*) \|_{\infty}\!=\! \|u^{(N)} \!-\! J_{\beta^*}(u^{(N)})^{-1}
F(\beta^*, u^{(N)}) \!-\! \varphi(\beta^*)\|_{\infty}\\[1ex]
\qquad =\Big\|\! J_{\beta^*}(u^{(N)})^{-1} \big(\!
J_{\beta^*}(u^{(N)}) \big(u^{(N)} \!-\! \varphi(\beta^*)\big) \!-\!
F(\beta^*,u^{(N)}) \!+\! F(\beta^*,\varphi(\beta^*))\!\big)\Big\|_{\infty}\\[1ex]
\qquad \le \|\! J_{\beta^*}(u^{(N)})^{-1}
J_{\varphi(\beta^*)}\|_{\infty}\\
\qquad \qquad \qquad \Big\|J_{\varphi(\beta^*)}^{-1} \big(\!
J_{\beta^*}(u^{(N)}) \big(u^{(N)} \!-\! \varphi(\beta^*)\big) \!-\!
F(\beta^*,u^{(N)}) \!+\! F(\beta^*,\varphi(\beta^*))\!\big)\Big\|_{\infty}\\[1ex]
\qquad \le\! \|J_{\beta^*}(u^{(N)})^{-1} J_{\varphi(\beta^*)}\|_{\infty} \|J_{\varphi(\beta^*)}^{-1}
\big(J_{\beta^*}(u^{(N)})\!-\!J_{\beta^*}(\xi)\big)\|_{\infty}
\|\big(u^{(N)} \!-\!\varphi(\beta^*)\big)\|_{\infty},
\end{array}
\end{equation}}
where $\xi$ is a point in the segment joining the points $u^{(N)}$ and $\varphi(\beta^*)$. Combining (\ref{ec: Cota4 Exist Inv}) and (\ref{ec: Cota Inv(u) por J}) we deduce that
$$\begin{array}{rcl}
\|u^{(N+1)}-\varphi(\beta^*)\|_\infty &<&
\mbox{$\frac{4}{3}$} \big\|J_{\varphi(\beta^*)}^{-1}
\big(J_{\beta^*}(u^{(N)})\!-\! J_{\beta^*}(\xi)\big)\big\|_{\infty} \delta_{\beta^*}\\[1ex]
&<& \mbox{$\frac{4c}{3}$}\delta_{\beta^*}^2 \le
\mbox{$\frac{1}{3}$} \delta_{\beta^*}
\end{array}$$
holds, with $c\!:=\! \big(2\eta_{\beta^*} (\theta^* + 1) C(\beta^*)\big) /
\big(g_1'\big(g^{-1}(C(\beta^*)/\beta^*)\big) (1 - \rho^*)\big)$. By an inductive argument we conclude that the iteration (\ref{ec: 2da iteracion}) is well-defined and converges to the positive solution $\varphi(\beta^*)$ of (\ref{ec: sistema_B}) for $B=\beta^*$. Furthermore, we conclude that the point $u^{(N+k)}$, obtained from the point
$u^{(N)}$ above after $k$ steps of the iteration (\ref{ec: 2da iteracion}), satisfies the estimate
$$ \|u^{(N+k)}-\varphi(\beta^*)\|_{\infty} \le \hat{c} \big(\mbox{$\frac{4c}{3}$} \delta_{\beta^*}\big)^{2^k} \le \hat{c} \Big(\frac{1}{3}\Big)^{2^k},$$
with $\hat{c}:={3}/{4 c }$. Therefore, in order to obtain an $\varepsilon$-approximation of $\varphi(\beta^*)$, we have to perform $\log_2\log_3(3/4c\varepsilon)$ steps of the iteration (\ref{ec: 2da iteracion}). Summarizing, we have the following result.
\begin{lemma}\label{Lema: conv 2da iteracion}
Let $\varepsilon>0$ be given. Then, for every $u^{(N)} \in (\Rpos)^n$ satisfying the condition $\|u^{(N)}-
\varphi(\beta^*)\|_\infty < \delta$, the iteration (\ref{ec: 2da iteracion}) is well-defined and the estimate
$\|u^{(N+k)}-\varphi(\beta^*)\|_\infty < \varepsilon$ holds for $k \ge \log_2\log_3(3/4c\varepsilon)$.
\end{lemma}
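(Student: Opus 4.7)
The plan is to formalize, via a straightforward induction on $k$, the quadratic-convergence analysis that has been carried out in the paragraphs immediately preceding the statement. The base step $k=0$ is the hypothesis $\|u^{(N)}-\varphi(\beta^*)\|_\infty<\delta\le\delta_{\beta^*}$. Since the definition (\ref{ec: Def delta_beta}) forces $\delta_{\beta^*}\le g^{-1}(1/\beta^*)$, this already places $u^{(N)}$ in the compact set $K_{\beta^*}$, so the bound (\ref{ec: Cota Exist Inv}) together with the standard perturbation lemma \cite[Lemma 2.3.2]{OrRh70} ensures that $J_{\beta^*}(u^{(N)})$ is invertible and (\ref{ec: Cota Inv(u) por J}) holds. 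In particular, the Newton step $u^{(N+1)}$ is well-defined.

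Next, I would carry out the Newton error identity
$$
u^{(N+1)}-\varphi(\beta^*)=-J_{\beta^*}(u^{(N)})^{-1}\bigl(F(\beta^*,u^{(N)})-F(\beta^*,\varphi(\beta^*))-J_{\beta^*}(u^{(N)})(u^{(N)}-\varphi(\beta^*))\bigr),
$$
and apply the Mean Value Theorem componentwise to write $F(\beta^*,u^{(N)})-F(\beta^*,\varphi(\beta^*))=J_{\beta^*}(\xi)(u^{(N)}-\varphi(\beta^*))$ for a point $\xi$ on the segment joining $u^{(N)}$ and $\varphi(\beta^*)$. Inserting the factor $J_{\varphi(\beta^*)}J_{\varphi(\beta^*)}^{-1}$ and using the chain of inequalities displayed in (\ref{ec: conv 2da iteracion}) yields exactly the bound
$$
\|u^{(N+1)}-\varphi(\beta^*)\|_\infty\le \tfrac{4}{3}\bigl\|J_{\varphi(\beta^*)}^{-1}(J_{\beta^*}(u^{(N)})-J_{\beta^*}(\xi))\bigr\|_\infty\,\|u^{(N)}-\varphi(\beta^*)\|_\infty,
$$
and then the estimate (\ref{ec: Cota4 Exist Inv}) (applied with $u=u^{(N)}$, $v=\xi$, both within distance $\delta_{\beta^*}$ of $\varphi(\beta^*)$) gives the crucial quadratic contraction $\|u^{(N+1)}-\varphi(\beta^*)\|_\infty\le \tfrac{4c}{3}\|u^{(N)}-\varphi(\beta^*)\|_\infty^2$, with $c$ as defined in the text. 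The choice of $\delta_{\beta^*}$ in (\ref{ec: Def delta_beta}) was precisely made so that $\tfrac{4c}{3}\delta_{\beta^*}\le \tfrac{1}{3}$, and hence $\|u^{(N+1)}-\varphi(\beta^*)\|_\infty\le \tfrac{1}{3}\delta_{\beta^*}<\delta_{\beta^*}$, which closes the inductive step: the next iterate again lies in the ball where the hypotheses hold.

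Iterating this one-step estimate $k$ times produces $\|u^{(N+k)}-\varphi(\beta^*)\|_\infty\le \hat{c}\bigl(\tfrac{4c}{3}\delta_{\beta^*}\bigr)^{2^k}\le \hat{c}\,(1/3)^{2^k}$ with $\hat{c}=3/(4c)$, the standard doubly-exponential Newton rate. Solving $\hat{c}\,(1/3)^{2^k}<\varepsilon$ yields the quantitative bound $k\ge \log_2\log_3(3/(4c\varepsilon))$, giving the claimed $\varepsilon$-accuracy. The only delicate point is the inductive maintenance of the invariant $u^{(N+k)}\in \mathcal{B}_{\delta_{\beta^*}}(\varphi(\beta^*))\cap K_{\beta^*}$, so that (\ref{ec: Cota Exist Inv})--(\ref{ec: Cota Inv(u) por J}) and (\ref{ec: Cota4 Exist Inv}) remain applicable at every step; this is guaranteed by the sharp calibration $\tfrac{4c}{3}\delta_{\beta^*}\le \tfrac{1}{3}$ built into the definition of $\delta_\beta$, and no further work is needed.
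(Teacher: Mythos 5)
Your proposal is correct and reproduces the paper's own argument essentially verbatim: the paper presents this lemma as a summary of the Newton analysis in the preceding paragraphs, and you formalize exactly that chain — the Newton error identity, insertion of $J_{\varphi(\beta^*)}J_{\varphi(\beta^*)}^{-1}$, application of (\ref{ec: Cota Inv(u) por J}) and (\ref{ec: Cota4 Exist Inv}), the calibration $\tfrac{4c}{3}\delta_{\beta^*}\le\tfrac{1}{3}$ built into (\ref{ec: Def delta_beta}), and the resulting doubly-exponential rate giving $k\ge\log_2\log_3(3/4c\varepsilon)$. The inductive maintenance of the invariant $u^{(N+k)}\in\mathcal{B}_{\delta_{\beta^*}}(\varphi(\beta^*))\cap K_{\beta^*}$, which you single out as the delicate point, is handled the same way the paper handles it, so no further commentary is needed.
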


Let $\varepsilon>0$. Assume that we are given $u^{(0)} \in (\Rpos)^n$ such that $\|u^{(0)}-\varphi(\beta_*)\|_\infty < \delta$ holds. In order to compute an $\varepsilon$-approximation of the positive solution $\varphi(\beta^*)$ of (\ref{ec: sistema_B}) for $B=\beta^*$,  we perform $N$ iterations of (\ref{ec: 1ra iteracion}) and $k_0 := \lceil
\log_2\log_3(3/4c\varepsilon)\rceil$ iterations of (\ref{ec: 2da iteracion}). From Lemmas \ref{Lema: conv 1ra iteracion} and \ref{Lema: conv 2da iteracion} we conclude that the output $u^{(N+k_0)}$ of this procedure satisfies
the condition $\|u^{(N+k_0)} - \varphi(\beta^*)\|_\infty < \varepsilon$. Observe that the Jacobian matrix $J_{\beta}(u)$ is tridiagonal for every $\beta \in [\beta_*,\beta^*]$ and every $u \in K_{\beta}$. Therefore, the solution of a linear system with matrix $J_{\beta}(u)$ can be obtained with $O(n)$ flops. This implies that each iteration of both (\ref{ec: 1ra iteracion}) and (\ref{ec: 2da iteracion}) requires $O(n)$ flops. In conclusion, we have the following result.
\begin{proposition}\label{Prop: Complejidad}
Let $\varepsilon > 0$ and $u^{(0)} \in (\Rpos)^n$ with $\|u^{(0)}-\varphi(\beta_*)\|_\infty < \delta$ be given, where $\delta$ is defined as in Remark \ref{Remark: cond suf conv Newton}. Then the output of the iteration (\ref{ec: 1ra iteracion})--(\ref{ec: 2da iteracion}) is an $\varepsilon$-approximation of the positive solution $\varphi(\beta^*)$ of (\ref{ec: sistema_B}) for $B=\beta^*$. This iteration can be computed with $O(N n + k_0 n)=O\big(n \log_2\log_2(1/\varepsilon)\big)$ flops.
\end{proposition}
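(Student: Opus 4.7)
The plan is to combine the two convergence results already established, Lemma \ref{Lema: conv 1ra iteracion} and Lemma \ref{Lema: conv 2da iteracion}, and then to bound the arithmetic cost of each Newton step by exploiting the tridiagonal structure of $J_\beta(u)$. Most of the analytical work has been done in the preceding lemmas; what remains is essentially a bookkeeping argument.

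For the correctness part, I start from the hypothesis $\|u^{(0)}-\varphi(\beta_*)\|_\infty<\delta$. Applying Lemma \ref{Lema: conv 1ra iteracion}, the first $N$ iterates produced by (\ref{ec: 1ra iteracion}) are well-defined and the final iterate $u^{(N)}$ satisfies $\|u^{(N)}-\varphi(\beta^*)\|_\infty<\delta$, so it is an attraction point for the Newton iteration (\ref{ec: 2da iteracion}) associated with $B=\beta^*$. Then I feed $u^{(N)}$ into Lemma \ref{Lema: conv 2da iteracion}, which asserts that (\ref{ec: 2da iteracion}) is well-defined and that $\|u^{(N+k)}-\varphi(\beta^*)\|_\infty<\varepsilon$ for every $k\ge\log_2\log_3(3/4c\varepsilon)$. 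Choosing $k_0:=\lceil\log_2\log_3(3/4c\varepsilon)\rceil$ therefore guarantees that the output $u^{(N+k_0)}$ is an $\varepsilon$-approximation of $\varphi(\beta^*)$.

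For the cost part, the key observation is structural: the Jacobian $J_\beta(u)=(\partial F/\partial U)(\beta,u)$ is tridiagonal for every admissible $\beta$ and $u$, as is apparent from its definition in Section \ref{seccion: condicionamiento}. Consequently, each update in (\ref{ec: 1ra iteracion}) and (\ref{ec: 2da iteracion}) reduces to solving a tridiagonal linear system together with a constant number of vector operations in $\R^n$, each of which can be carried out in $O(n)$ flops (for instance, by the Thomas algorithm). Evaluating $F(\beta,u^{(k)})$ and assembling $J_\beta(u^{(k)})$ also require only $O(n)$ flops, since each coordinate depends on a bounded number of entries of $u^{(k)}$. Hence every iteration contributes $O(n)$ flops.

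It remains to count iterations. The total number of Newton steps is $N+k_0$. By definition $N=\lceil 3\beta^*\kappa_1/\delta\rceil+1$, and all three quantities $\beta^*$, $\kappa_1$ and $\delta$ are independent of $h=1/(n-1)$: $\beta^*$ is fixed, $\kappa_1$ is the uniform condition number bound of Theorem \ref{Teo: Cota Nro Condicion}, and $\delta=\min_{\beta\in[\beta_*,\beta^*]}\delta_\beta$ depends only on $g_1,g_2,\beta_*,\beta^*$ through (\ref{ec: Def delta_beta}). Therefore $N=O(1)$, while by construction $k_0=O(\log_2\log_2(1/\varepsilon))$. Multiplying the per-step cost by the number of steps yields the claimed overall complexity $O((N+k_0)n)=O\bigl(n\log_2\log_2(1/\varepsilon)\bigr)$ flops. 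The only subtlety worth pausing on is the $h$-independence of $N$, which rests entirely on Theorem \ref{Teo: Cota Nro Condicion}; everything else is a straightforward accounting of the cost of tridiagonal Newton steps.
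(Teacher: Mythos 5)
Your proposal is correct and mirrors the paper's own argument, which likewise chains Lemma \ref{Lema: conv 1ra iteracion} (to reach a $\delta$-neighborhood of $\varphi(\beta^*)$ after $N$ path-following steps) into Lemma \ref{Lema: conv 2da iteracion} (to reach an $\varepsilon$-neighborhood after $k_0=\lceil\log_2\log_3(3/4c\varepsilon)\rceil$ further Newton steps), and then uses the tridiagonal structure of $J_\beta(u)$ together with the $h$-independence of $N$ established just before the proposition to obtain the $O(n\log_2\log_2(1/\varepsilon))$ flop bound. The only points you add beyond the paper's inline derivation are the explicit remark that assembling $J_\beta$ and evaluating $F$ also cost $O(n)$, and a spelled-out justification of why $N=O(1)$; both are harmless amplifications of what the paper takes for granted.
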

Finally, we exhibit a starting point $u^{(0)} \in (\Rpos)^n$ satisfying the condition of Proposition \ref{Prop: Complejidad}. Let $\beta_* > 0$ be a constant independent of $h$ to be determined. We study the constant
$$\delta :=  \min\{ \delta_{\beta}: \beta \in [\beta_*,\beta^*] \},$$
where
$$\delta_\beta :=\min\Big\{ \displaystyle\frac{g_1'\big(g^{-1}(C(\beta)/\beta)\big) (1 - \rho^*)}{8\eta_\beta (\theta^* + 1) C(\beta)}, g^{-1}(1/\beta) \Big\}.$$
Since $g_1$ and $g_2$ are analytic functions in $x=0$, in a neighborhood of $0 \in \R^n$, we can rewrite $\eta_\beta$ as follows:
\begin{eqnarray*}
\eta_\beta &=& 2\max\{g_1''\big(2g^{-1}(1/\beta)\big), g_2''\big(2g^{-1}(1/\beta)\big) g\big(g^{-1}(1/\beta)\big)\}\\
           &=& 2 \max\{ S_1(\beta), S_2(\beta)\} \big(g^{-1}(1/\beta)\big)^{p-2},
\end{eqnarray*}
where $p$ is the multiplicity of 0 as a root of $g_1$ and $S_i$ is an analytic function in $x=0$ such that $\lim_{\beta \rightarrow 0} S_i(\beta) \neq 0$ for $i=1,2$. Taking into account that $\beta \in (0, \beta^*]$ holds, we conclude that there exists a constant $\eta^* > 0$, which depends only on $\beta^*$, with
$$
\eta_\beta \le 2 \eta^* \big(g^{-1}(1/\beta)\big)^{p-2}
$$
for all $\beta \in (0, \beta^*]$. Moreover, with a similar argument we deduce that there exists a constant $\vartheta^* > 0$, which depends only on $\beta^*$, such that
\begin{equation}\label{ec: cota1 delta}
\delta_\beta \ge \min\Big\{ \displaystyle\frac{\vartheta^* (1 - \rho^*)}{16 \eta^* (\theta^* + 1) C(\beta)} \Big(\displaystyle\frac{ g^{-1}(C(\beta)/\beta)}{ g^{-1}(1/\beta)}\Big)^{p-1}, 1 \Big\} g^{-1}(1/\beta).
\end{equation}
We claim that
\begin{equation}\label{ec: limite cociente 1}
\lim_{\beta \rightarrow 0^+} \displaystyle\frac{ g^{-1}(C(\beta)/\beta)}{ g^{-1}(1/\beta)} = 1^-.
\end{equation}
In fact, since we have $C(\beta) \ge 1$ and $g^{-1}$ is decreasing, it follows that
\begin{equation}\label{ec: limite cociente 1 AUX1}
\displaystyle\frac{ g^{-1}(C(\beta)/\beta)}{ g^{-1}(1/\beta)} \le 1.
\end{equation}
On the other hand, there exists $\xi \in (1/\beta, C(\beta)/\beta)$ with
\begin{eqnarray}
\displaystyle\frac{ g^{-1}(C(\beta)/\beta)}{ g^{-1}(1/\beta)} & = & \displaystyle\frac{ g^{-1}(1/\beta)+ (g^{-1})'(\xi) \big((C(\beta)-1)/{\beta}\big)}{ g^{-1}(1/\beta)} \nonumber \\
& = & 1 + \displaystyle\frac{g\big(g^{-1}(1/\beta)\big)}{  g'\big(g^{-1}(\xi)\big) g^{-1}(1/\beta)}\big(C(\beta)-1\big) \nonumber \\
& \ge & 1 + \displaystyle\frac{g\big(g^{-1}(1/\beta)\big)}{  g'\big(g^{-1}(1/\beta)\big) g^{-1}(1/\beta)}\big(C(\beta)-1\big). \label{ec: limite cociente 1 AUX2}
\end{eqnarray}
Since $g_1$ and $g_2$ are analytic functions in $x=0$ and $\lim_{\beta \rightarrow 0^+} C(\beta) = 1$, we see that
$$
\lim_{\beta \rightarrow 0^+} \displaystyle\frac{g\big(g^{-1}(1/\beta)\big)}{  g'\big(g^{-1}(1/\beta)\big) g^{-1}(1/\beta)}\big(C(\beta)-1\big) = 0.
$$
Combining (\ref{ec: limite cociente 1 AUX1}), (\ref{ec: limite cociente 1 AUX2}) and this inequality we inmediately deduce (\ref{ec: limite cociente 1}).

Combining (\ref{ec: cota1 delta}) with (\ref{ec: limite cociente 1}) we conclude that there exists a constant $C^* \in (0,1]$, which depends only on $\beta^*$, with
$$
\delta_\beta \ge C^* g^{-1}(1/\beta).
$$
Therefore,
\begin{equation}\label{ec: cota2 delta}
\delta = \min\{\delta_\beta : \beta \in [\beta_*, \beta^*]\} \ge C^* g^{-1}(1/\beta_*).
\end{equation}
From Corollary \ref{Coro: cota sup un alpha} and Lemma \ref{Lema: Cota inf u1 alpha}, we have
$$
    \varphi(\beta_*) \in [g^{-1}\big({C}({\beta_*})/{\beta_*}\big), g^{-1}(1/{\beta_*})]^n.
$$
Furthermore, by (\ref{ec: limite cociente 1}), we deduce that
\begin{equation}\label{ec: cota3 delta}
\Big(1 - \frac{g^{-1}\big({C}({\beta_*})/{\beta_*}\big)} {g^{-1}(1/\beta_*)}\Big) g^{-1}(1/\beta_*) < C^* g^{-1}(1/\beta_*)
\end{equation}
holds for $\beta_*>0$ small enough. Combining this with (\ref{ec: cota2 delta}), we conclude that
\begin{equation}\label{ec: (u - phi) menor delta}
\|u-\varphi(\beta_*)\|_{\infty} \le g^{-1}(1/{\beta_*}) - g^{-1}\big({C}({\beta_*})/{\beta_*}\big) < \delta
\end{equation}
holds for all $u \in [g^{-1}\big({C}({\beta_*})/{\beta_*}\big), g^{-1}(1/{\beta_*})]^n$. Thus, let $\beta_* <
\beta^*$ satisfy (\ref{ec: cota3 delta}). Then, for any $u^{(0)}$ in the hypercube $[g^{-1}\big({C}({\beta_*})/{\beta_*}\big), g^{-1}(1/{\beta_*})]^n$, the inequality
$$
\|u^{(0)}-\varphi(\beta_*)\|_{\infty}  < \delta
$$
holds. Therefore, applying Proposition \ref{Prop: Complejidad}, we obtain our main result.
\begin{theorem}\label{Teo: final estimate cost}
Let $\varepsilon\!>\!0$ be given. Then we can compute an $\varepsilon$-approximation of the positive
solution of (\ref{ec: sistema_B}) for $B=\beta^*$ with $O\big(n\log_2\log_2(1/\varepsilon)\big)$ flops.
\end{theorem}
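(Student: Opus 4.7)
The plan is to combine the starting-point construction already developed in the paragraphs preceding the theorem with the cost estimate of Proposition \ref{Prop: Complejidad}. All the hard analytic work has been done: uniqueness of the path (Theorem \ref{Teo: uniqueness}), $h$-independent condition number (Theorem \ref{Teo: Cota Nro Condicion}), invertibility of the Jacobian in a neighborhood of the path (equations (\ref{ec: Cota Exist Inv}) and (\ref{ec: Cota Inv(u) por J})), and quadratic convergence of both Newton phases (Lemmas \ref{Lema: conv 1ra iteracion} and \ref{Lema: conv 2da iteracion}). So the theorem amounts to exhibiting a concrete initial guess $u^{(0)}$ lying in the attraction basin and then accounting for the flops.

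First, I would fix $\beta_*>0$ small enough, depending only on $\beta^*$, so that inequality (\ref{ec: cota3 delta}) is satisfied; this is possible because of the limit (\ref{ec: limite cociente 1}) established just above. Then I would pick any $u^{(0)}$ in the hypercube $[g^{-1}(C(\beta_*)/\beta_*),\,g^{-1}(1/\beta_*)]^n$. By Corollary \ref{Coro: cota sup un alpha} and Lemma \ref{Lema: Cota inf u1 alpha}, the target $\varphi(\beta_*)$ lies in the same hypercube, so the estimate (\ref{ec: (u - phi) menor delta}) immediately gives $\|u^{(0)}-\varphi(\beta_*)\|_\infty<\delta$. Note that such a $u^{(0)}$ can be written down in closed form from $g^{-1}$ and so costs $O(n)$ flops to produce.

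Second, I would invoke Proposition \ref{Prop: Complejidad} verbatim: the iteration (\ref{ec: 1ra iteracion})--(\ref{ec: 2da iteracion}), launched from this $u^{(0)}$, outputs a point $u^{(N+k_0)}$ with $\|u^{(N+k_0)}-\varphi(\beta^*)\|_\infty<\varepsilon$ in $O(Nn+k_0 n)$ flops, where $N=\lceil 3\beta^*\kappa_1/\delta\rceil+1$ and $k_0=\lceil\log_2\log_3(3/(4c\varepsilon))\rceil$. Since $\kappa_1$, $\delta$, $c$ and $\beta^*$ are all independent of $h$ (that is precisely the content of Theorem \ref{Teo: Cota Nro Condicion} together with the definition of $\delta$ in (\ref{ec: cota2 delta})), we have $N=O(1)$ and $k_0=O(\log_2\log_2(1/\varepsilon))$. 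Consequently the total cost is
\[
O(Nn)+O(k_0 n)=O\bigl(n\log_2\log_2(1/\varepsilon)\bigr),
\]
which is the claimed bound.

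The only non-mechanical point is the first step, namely checking that the hypercube $[g^{-1}(C(\beta_*)/\beta_*),\,g^{-1}(1/\beta_*)]^n$ actually sits inside the ball of radius $\delta$ around $\varphi(\beta_*)$; but this is exactly what (\ref{ec: cota3 delta})--(\ref{ec: (u - phi) menor delta}) were engineered to provide, so there is no real obstacle beyond citing those inequalities. Everything else is bookkeeping: verifying that $N$ and $\delta$ (and hence $c$) depend only on $\beta^*$, and that each Newton step costs $O(n)$ because the Jacobian $J_\beta(u)$ is tridiagonal and can therefore be factored and back-substituted in linear time.
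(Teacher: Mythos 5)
Your proposal follows the paper's argument essentially verbatim: the paragraphs preceding the theorem establish the lower bound $\delta \ge C^* g^{-1}(1/\beta_*)$ via (\ref{ec: cota1 delta})--(\ref{ec: cota2 delta}), verify via (\ref{ec: limite cociente 1}) that (\ref{ec: cota3 delta}) holds for $\beta_*$ small enough, deduce (\ref{ec: (u - phi) menor delta}) so that any $u^{(0)}$ in the hypercube $[g^{-1}(C(\beta_*)/\beta_*),\,g^{-1}(1/\beta_*)]^n$ lies within $\delta$ of $\varphi(\beta_*)$, and then invoke Proposition \ref{Prop: Complejidad} to get the $O(n\log_2\log_2(1/\varepsilon))$ flop count, using the $h$-independence of $\kappa_1$, $\delta$, $c$ to conclude $N=O(1)$. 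Your added observation that producing $u^{(0)}$ from $g^{-1}$ itself costs only $O(n)$ is correct and merely makes explicit what the paper leaves implicit.
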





\end{document}
\endinput